\newcommand*{\mailto}[1]{\href{mailto:#1}{\nolinkurl{#1}}}
\newcommand{\arxiv}[1]{\href{http://arxiv.org/abs/#1}{arXiv:#1}}
\newcommand{\bbC}{{\mathbb{C}}}
\newcommand{\bbN}{{\mathbb{N}}}
\newcommand{\bbR}{{\mathbb{R}}}
\newcommand{\cH}{{\mathcal H}}
\newcommand{\beq}{\begin{equation}}
\newcommand{\enq}{\end{equation}}
\DeclareMathOperator{\supp}{supp}
\renewcommand{\ln}{\text{\rm ln}}
\newcommand{\no}{\notag}
\newcommand{\lb}{\label}
\newcommand{\f}{\frac}
\newcommand{\ol}{\overline}
\newcommand{\wti}{\widetilde}
\newcommand{\Oh}{O}
\newcommand{\bi}{\bibitem}
\renewcommand{\dot}{\overset{\textbf{\Large.}}}
\let\geq\geqslant
\let\leq\leqslant
\def\theequation{\@arabic\c@equation}
\numberwithin{equation}{section}
\newtheorem{theorem}{Theorem}[section]
\newtheorem{lemma}[theorem]{Lemma}
\newtheorem{corollary}[theorem]{Corollary}
\theoremstyle{remark}
\newtheorem{remark}[theorem]{Remark}
\newcommand*{\smcirc}{\mathrel{\scalebox{0.4}{$\circ$}}}
\newcommand{\kk}{N}
\begin{document}

\title[Weighted Birman Inequalities with Logarithmic Refinements]{A Sequence of Weighted 
Birman--Hardy--Rellich Inequalities with Logarithmic Refinements} 

\author[F.\ Gesztesy]{Fritz Gesztesy}
\address{Department of Mathematics, 
Baylor University, One Bear Place \#97328,
Waco, TX 76798-7328, USA}
\email{Fritz$\_$Gesztesy@baylor.edu}
\urladdr{http://www.baylor.edu/math/index.php?id=935340}

\author[L.\ L.\ Littlejohn]{Lance L.\ Littlejohn}
\address{Department of Mathematics, 
Baylor University, One Bear Place \#97328,
Waco, TX 76798-7328, USA}
\email{Lance$\_$Littlejohn@baylor.edu}
\urladdr{http://www.baylor.edu/math/index.php?id=53980}

\author[I.\ Michael]{Isaac Michael}
\address{Department of Mathematics, 
Louisiana State University, 
Baton Rouge, LA 70803-4918, USA}
\email{imichael@lsu.edu}

\author[M.\ M.\ H.\ Pang]{Michael M.\ H.\ Pang}
\address{Department of Mathematics,
University of Missouri, Columbia, MO 65211, USA}
\email{pangm@missouri.edu}
\urladdr{https://www.math.missouri.edu/people/pang}


\date{\today}
\subjclass[2010]{Primary: 26D10, 34A40, 35A23; Secondary: 34L10.}
\keywords{Birman--Hardy--Rellich inequalities, logarithmic refinements.}

\begin{abstract} 
The principal aim of this paper is to extend Birman's sequence of integral inequalities 
\begin{align*}
\int_0^\rho dx \, \big| f^{(m )}(x)\big|^{2} \geq \frac{[(2m -1)!!]^{2}}{2^{2m }} \int_0^\rho dx \, 
x^{-2m } |f(x)|^2,& \\
f\in C_{0}^{m }((0,\rho)), \; m  \in \bbN, \quad  \rho \in (0, \infty) \cup \{\infty\},&
\end{align*}
originally obtained in 1961, and containing Hardy's and Rellich's inequality (i.e., $m=1,2$) as special cases, to a sequence of inequalities that incorporates power weights on either side and logarithmic refinements on the right-hand side of the inequality as well.

Introducing iterated logarithms given by
\[
\ln_{1}( \, \cdot \,) = \ln(\, \cdot \,), \quad \ln_{j+1}( \, \cdot \,) = \ln( \ln_{j}(\, \cdot \,)), \quad j \in \bbN,
\]
and  iterated exponentials,
\[
e_{0} = 0, \quad e_{j+1} = e^{e_{j}}, \quad j \in \bbN_{0} = \bbN \cup \{0\}, 
\]
a particular (but representative) extension of Birman's sequence we will prove then reads 
\begin{align*}
&\int_0^{\rho} dx \, x^{\alpha} \big| f^{(m )}(x) \big|^{2}
\geq A(\ell, \alpha) \int_0^{\rho} dx \,  x^{\alpha - 2\ell} \big|f^{(m-\ell)}(x)\big|^{2}   \no \\
&\quad+ B(\ell,\alpha) \sum_{k=1}^{\kk} \int_0^{\rho} dx \, x^{\alpha - 2\ell}\prod_{\ell=1}^{k} 
[\ln_{\ell}(\gamma/x)]^{-2} \big|f^{(m-\ell)}(x)\big|^{2},   \no \\
& \, f \in C_{0}^{\infty}((0, \rho)), \; \ell, m , \kk \in \bbN, \; 1 \leq \ell \leq m, \; 
\alpha \in \bbR, \; \rho, \gamma \in (0,\infty), 
\; \gamma \geq e_{\kk} \rho.
\end{align*}
Here the constants $A(p, \alpha)$ and $B(p, \alpha)$, $p \in \bbN$, are of the form 
\begin{align*}
A(p, \alpha) = \prod_{j=1}^{p} \bigg( \frac{2j - 1 -\alpha}{2} \bigg)^2, \quad 
B(p, \alpha) = \frac{1}{4^{p}} \sum_{k=1}^{p} \ \prod_{\substack{j = 1\\ j \ne k}}^{p} ( 2j - 1 - \alpha )^{2}.  
\end{align*}
The constants $A(\ell, \alpha)$ in the above extension of Birman's inequality are optimal, and so are the conditions on $\gamma$. Moreover, employing a new technique of proof relying on a combination of transforms originally due to Hartman and M\"uller-Pfeiffer, the parameter $\alpha \in \bbR$ in the power weights is now unrestricted, considerably improving on prior results in the literature.

We also indicate a vector-valued version of these inequalities, replacing complex-valued $f(\,\cdot\,)$ by 
$f(\,\cdot\,) \in \cH$, with $\cH$ a complex, separable Hilbert space.     
\end{abstract}

\maketitle


\section{Introduction} \lb{s1}

To be able to describe the content of this paper we start by recalling Birman's infinite sequence of integral inequalities \cite{Bi66}, the sequence of Birman--Hardy--Rellich inequalities of the form 
\begin{align}\lb{1.1}
\begin{split}
\int_a^b dx \, \big| f^{(m )}(x)\big|^{2} \geq \frac{[(2m -1)!!]^{2}}{2^{2m }} \int_a^b dx \,  x^{-2m } |f(x)|^2,& \\
f\in C_{0}^{m }((a,b)), \; m  \in \bbN, \quad 0 \leq a < b \leq \infty,&
\end{split}
\end{align}
which appeared in 1961, and in English translation in 1966 (see also \cite[pp.~83--84]{Gl66}). The case $m =1$ in \eqref{1.1} represents Hardy's celebrated inequality \cite{Ha25}, \cite[Sect.~9.8]{HLP88} (see also 
\cite[Chs.~1, 3, App.]{KMP07}), the case $m=2$ is due to Rellich 
\cite[Sect.~II.7]{Re69} (actually, in the multi-dimensional context). The inequalities \eqref{1.1} and their power weighted generalizations, that is, the first line in \eqref{1.9}, are known to be strict, that is, equality holds in \eqref{1.1}, resp., in the first line in \eqref{1.9} (in fact, for the entire inequality \eqref{1.9}) if and only if $f=0$ on $(a,b)$. Moreover, these inequalities are optimal, meaning, the constants $[(2m-1)!!]^2/2^{2m}$ in \eqref{1.1}, respectively, the constants $A(m,\alpha)$ in \eqref{1.9} are sharp, although, this must be qualified as different authors frequently prove sharpness for different function spaces. In the present one-dimensional context at hand, sharpness of \eqref{1.1} (and typically, it's power weighted version, the first line in \eqref{1.9}), are often proved in an integral form (rather than the currently presented differential form) where $f^{(m)}$ on the left-hand side is replaced by $F$ and $f$ on the right-hand side by $m$ repeated integrals over $F$. For pertinent one-dimensional sources, we refer, for instance, to \cite[p.~3--5]{BEL15}, \cite{CEL99}, \cite[p.~104--105]{Da95}, \cite{GLMW18, GGMR19, Ha25}, \cite[p.~240--243]{HLP88}, \cite[Ch.~3]{KMP07}, \cite[p.~5--11]{KPS17}, \cite{La26, Mu72, PS15}. 
We also note that higher-order Hardy inequalities, including various weight functions, are discussed in \cite[Sect.~5]{Ku85}, \cite[Chs.~2--5]{KMP07}, \cite[Chs.~1--4]{KPS17}, \cite{KW92}, and \cite[Sect.~10]{OK90} (however, Birman's sequence of inequalities \eqref{1.1} is not mentioned in these sources). In addition, there are numerous sources which treat multi-dimensional versions of these inequalities on various domains $\Omega \subseteq \bbR^n$, which, when specialized to radially symmetric functions (e.g., when $\Omega$ represents a ball), imply one-dimensional Birman--Hardy--Rellich-type inequalities with power weights under various restrictions on these weights (cf.\ Remarks \ref{r3.3}\,$(ii)$ and \ref{rA.3}). However, none of the results obtained in this manner imply our principal result, \eqref{1.9}, under optimal hypotheses on $\alpha$ and $\gamma$. We also mention that a large number of these references treat the $L^p$-setting, and in some references $x \in (a,b)$ is replaced by $d(x)$, the distance of $x$ to the boundary of $(a,b)$, respectively, $\Omega$, but this represents quite a different situation (especially in the multi-dimensional context) and hence is not further discussed in this paper. 

The primary aim in this paper is to prove optimal inequalities of the type \eqref{1.1} with additional weights (of power-type on either side of \eqref{1.1}) and logarithmic refinements (i.e., additional, only logarithmically weaker, singularities on the right-hand side of \eqref{1.1}). To describe our new results in detail, we need some preparations and introduce the iterated logarithms $\ln_{j}(\, \cdot \,)$, $j \in \bbN$ (cf.\ \cite{Ha48}, \cite[pp.~324--325]{Ha02})), given by
\begin{align}\lb{1.2}
\ln_{1}( \, \cdot \,) = \ln(\, \cdot \,), \quad \ln_{j+1}( \, \cdot \,) = \ln( \ln_{j}(\, \cdot \,)), \quad j \in \bbN,
\end{align}
and also normalized iterated logarithms $L_{j}(\, \cdot \,)$, $j \in \bbN$ (see, e.g., \cite{BFT03}), 
\begin{align}\lb{1.4}
L_{1}(\, \cdot \,) = \big(1 - \ln(\, \cdot \,)  \big)^{-1}, \quad 
L_{j+1}(\, \cdot \,) = L_{1} (L_{j}(\, \cdot \,)), \quad j \in \bbN.
\end{align}
In addition, we introduce iterated exponentials in the form,
\begin{equation}\lb{1.3}
e_{0} = 0, \quad e_{j+1} = e^{e_{j}}, \quad j \in \bbN_{0} = \bbN \cup \{0\}.
\end{equation}
Moreover, for $m  \in \bbN$ and $\alpha \in \bbR$, we introduce the constants
\begin{align}
&A(m , \alpha) = \prod_{j=1}^{m } \bigg( \frac{2j - 1 -\alpha}{2} \bigg)^2, \lb{1.5a} \\
&B(m , \alpha) = \frac{1}{4^{m }} \sum_{k=1}^{m } \ \prod_{\substack{j = 1\\ j \ne k}}^{m } ( 2j - 1 - \alpha )^{2}. \lb{1.5b}
\end{align}
One observes that 
\begin{align}
B(m , \alpha) &= A(m , \alpha) \sum_{j=1}^{m } \frac{1} {(2j - 1 - \alpha)^{2}}, \quad m  \in \bbN, \; 
\alpha \in \bbR \backslash \{2j - 1\}_{1 \leq j \leq m },   \\
A(m ,0) &= \frac{[(2m -1)!!]^{2}}{2^{2m }},  \quad m  \in \bbN,
\end{align}
in particular, $A(m ,0)$ coincides with the constant in \eqref{1.1}.

The improved Birman inequalities contain additional constants $c_{\ell}(m ,\alpha)$, $\ell = 0,1, \dots, 2m $, which are defined in terms of the polynomial
\begin{align}\lb{1.6}
&P_{m , \alpha}(\lambda) = \sum_{\ell = 0}^{2m } c_{\ell}(m ,\alpha) \lambda^{\ell} = \prod_{j=1}^{m } \bigg( \lambda^2 - \frac{(2j-1-\alpha)^2}{4} \bigg), \quad m \in \bbN, \; \alpha \in \bbR.  
\end{align}

Given the notation introduced in \eqref{1.2}--\eqref{1.6}, we can now describe the principal results proved in this note: Let $\ell, m , \kk \in \bbN$, $1 \leq \ell \leq m$, $\alpha \in \bbR$, $\rho, \gamma \in (0,\infty)$, $\gamma \geq e_{\kk} \rho$, and 
$f \in C_{0}^{\infty}((0, \rho))$. Then the power-weighted Birman--Hardy--Rellich sequence with logarithmic refinements on the interior interval $(0,\rho)$ are of the form 
\begin{align}   
&\int_0^{\rho} dx \, x^{\alpha} \big| f^{(m )}(x) \big|^{2}
\geq A(\ell, \alpha) \int_0^{\rho} dx \,  x^{\alpha - 2\ell} \big|f^{(m-\ell)}(x)\big|^{2}   \no \\
&\quad+ B(\ell,\alpha) \sum_{k=1}^{\kk} \int_0^{\rho} dx \, x^{\alpha - 2\ell}\prod_{\ell=1}^{k} 
[\ln_{\ell}(\gamma/x)]^{-2} \big|f^{(m-\ell)}(x)|^{2}     \no \\
&\quad+ \sum_{j=2}^{\ell} |c_{2j}(\ell,\alpha)| A(j,0) \int_0^{\rho} dx \, x^{\alpha - 2\ell} [\ln(\gamma/x)]^{-2j} 
\big|f^{(m-\ell)}(x)\big|^{2}      \lb{1.9}  \\
&\quad+ \sum_{j=2}^{\ell} |c_{2j}(\ell,\alpha)|  B(j,0) \sum_{k=1}^{\kk - 1} 
\int_0^{\rho} dx \, x^{\alpha - 2\ell} [\ln(\gamma/x)]^{-2j}    \no \\
& \hspace*{5.5cm} \times \prod_{p=1}^{k} [\ln_{p+1}(\gamma/x)]^{-2} \big|f^{(m-\ell)}(x)\big|^{2}.   \no
\end{align}

Moreover, we prove the same sequence of inequalities on the exterior interval $(\rho,\infty)$ for 
$f \in C_{0}^{\infty}((\rho,\infty))$, and finally, both sets of inequalities (exterior and interior) are also proved with the iterated logarithms $\ln_j(\,\cdot\,)$ replaced by the normalized logarithms $L_j(\,\cdot\,)$, $j \in \bbN$. In the latter case an infinite series of logarithmic terms (i.e., the case $N = \infty$ in the analog of \eqref{1.9}) will be permitted. Furthermore, we show that all inequalities are strict, that is, equality holds if and only if $f = 0$ on $(0,\rho)$ (resp., $(\rho, \infty)$). For brevity, a careful comparison of our result with the existing ones in the literature is postponed to Remarks \ref{r3.3} and \ref{rA.3}. The issue of sharpness of constants will be discussed in Appendix \ref{sA}. 

A multi-dimensional version of our approach, focusing on radial and logarithmic refinements of Birman--Hardy--Rellich-type inequalities, will appear in \cite{GLMP20}. 

In Section \ref{s2} we introduce our principal tool, a combined Hartman--M\"uller-Pfeiffer transformation, our main  results are then proved in Section \ref{s3}. In Section \ref{s4} we derive the sequence of power-weighted Birman--Hardy--Rellich inequalities with logarithmic refinements in the vector-valued case, replacing complex-valued $f(\,\cdot\,)$ by $f(\,\cdot\,) \in \cH$, with $\cH$ a complex, separable Hilbert space. Finally, sharpness of the constants $A(m , \alpha)$ is derived in Appendix \ref{sA}.

\section{The Combined Hartman--M\"ueller-Pfeiffer Transformation} \lb{s2}

In this section we introduce an elementary, yet extremely useful, variable transformation, an appropriate combination of special cases of transformations considered by Hartman \cite{Ha48} (see also \cite[p.~324--325]{Ha02}) and 
M\"uller-Pfeiffer \cite[p.~200--207]{MP81}. We now introduce an extension of these transformations by Hartman and M\"uller-Pfeiffer applicable to  power weights and higher-order derivatives. This will be crucial in proving the 
power-weighted Birman--Hardy--Rellich inequalities with logarithmic refinements under most general conditions in our principal Section \ref{s3}. 

Let $m , \kk \in \bbN$ and suppose that 
\begin{equation}\lb{2.1}
\alpha \in \bbR \backslash \{j \,|\, 1 \leq j \leq 2m-1\}.
\end{equation}
Given $f \in C_{0}^{\infty}((e_{\kk}, \infty))$, the transformation
\begin{align}\lb{2.2}
&x = e^{t}, \; x \in (e_N,\infty), \quad  dx = e^{t}dt, \qquad  t \in (e_{\kk-1}, \infty), \\
&f(x) \equiv f(e^t) = e^{[(2m - 1 - \alpha)/2]t}w(t), \quad w \in C_{0}^{\infty}((e_{\kk-1}, \infty)), \lb{2.2a}
\end{align}
yields
\begin{align}\lb{2.3}
\big(x^{\alpha} f^{(m )}(x) \big)^{(m )} = e^{-[(2m + 1 - \alpha)/2]t} \sum_{\ell = 0}^{2m } c_{\ell}(m ,\alpha) w^{(\ell)}(t),
\end{align}
for appropriate constants $c_{\ell}(m ,\alpha)$, $\ell = 0, 1, \dots, 2m $ to be determined next.

The solutions of the differential equation 
\begin{equation}\lb{2.5}
\big(x^{\alpha} f^{(m )}(x) \big)^{(m )}  = 0,
\end{equation}
are linear combinations of the following powers of $x$:
\begin{align}\lb{2.6}
\begin{cases}
x^{j},   &j = 0, 1, \dots, m  - 1, \\
x^{k - \alpha}, &k = m , \dots, 2m  - 1.
\end{cases}
\end{align}
One notes that the solutions \eqref{2.6} are linearly independent due to \eqref{2.1}.

Thus, recalling \eqref{2.2}--\eqref{2.3}, it follows that the solutions of
\begin{equation}\lb{2.7}
\sum_{\ell = 0}^{2m } c_{\ell}(m ,\alpha) w^{(\ell)}(t) = 0, \quad t \in (e_{N-1},\infty),
\end{equation}
are the functions
\begin{align}\lb{2.8}
e^{(\frac{1+\alpha}{2} - m)t}x^{j} = e^{(j+\frac{1+\alpha}{2} - m)t}, \quad j = 0, 1, \dots, m  -1,
\end{align}
and
\begin{align}\lb{2.9}
e^{(\frac{1+\alpha}{2} - m)t}x^{k - \alpha} = e^{(k +\frac{1-\alpha}{2} - m)t}  \quad k = m , \dots, 2m  - 1.
\end{align}
Observe that for $j=0$ and $k = 2m  - 1$,
\begin{align}
\begin{split}
&e^{(j+\frac{1+\alpha}{2} - m)t} = e^{(\frac{1+\alpha}{2} - m)t} \\
&e^{(k +\frac{1-\alpha}{2} - m)t}  = e^{-(\frac{1+\alpha}{2} - m)t}.
\end{split}
\end{align}
For $j=1$ and $k = 2m -2$,
\begin{align}
\begin{split}
&e^{(j+\frac{1+\alpha}{2} - m)t} = e^{(\frac{3+\alpha}{2} - m)t} \\
&e^{(k +\frac{1-\alpha}{2} - m)t}  = e^{-(\frac{3+\alpha}{2} - m)t}.
\end{split}
\end{align}
Continuing iteratively, one concludes that the linearly independent solutions of \eqref{2.7} are of the form
\begin{equation}\lb{2.10}
e^{\pm \frac{1}{2}(2j +1 - 2m  + \alpha)t}, \quad j = 0, 1, \dots, m  - 1,
\end{equation}
By a simple relabeling, given $\alpha \in \bbR \backslash \{j \,|\, 1 \leq j \leq 2m-1\}$, this is equivalent to 
\begin{equation}\lb{2.11}
e^{\pm \frac{1}{2}(2j - 1 - \alpha)t}, \quad j = 1, \dots, m , \; t \in (e_{N-1},\infty), 
\end{equation}
are linearly independent solutions of \eqref{2.7}. 
The zeros of the characteristic polynomial of \eqref{2.7} are thus the constant factors in the exponents of \eqref{2.11}.
Hence, the characteristic polynomial is given by
\begin{align}\lb{2.12}
&P_{m , \alpha}(\lambda) = \sum_{\ell = 0}^{2m } c_{\ell}(m ,\alpha) \lambda^{\ell} \no \\
& \quad = \bigg(\lambda^2 - \frac{(1 - \alpha)^2}{4} \bigg) \bigg(\lambda^2 - \frac{(3 - \alpha)^2}{4} \bigg) \cdots \bigg(\lambda^2 - \frac{(2m -1 - \alpha)^2}{4}  \bigg)  \no \\
& \quad = \prod_{j=1}^{m } \bigg( \lambda^2 - \frac{(2j-1-\alpha)^2}{4} \bigg). 
\end{align}
Thus, the coefficients $c_{\ell}(m ,\alpha)$, $\ell = 0, 1, \dots, 2m $, satisfy the following properties: 
\begin{flalign}\lb{2.4}
(i)& \ c_{2j-1}(m , \alpha) = 0,  \quad  j = 1, \dots, m ;& \no \\[.23cm] 
(ii)& \ c_{2j}(m ,\alpha) = (-1)^{m -j}|c_{2j}(m ,\alpha)|,  \quad  j = 0, 1, \dots, m ;& \no \\[.23cm] 
(iii)& \ |c_{0}(m , \alpha)| = A(m , \alpha); & \\[.23cm] 
(iv)& \ |c_{2}(m ,\alpha)| = 4B(m ,\alpha);& \no  \\[.23cm] 
(v)& \ c_{2m }(m ,\alpha) = 1.& \no
\end{flalign}

Turning our attention to the iterated logarithms, given $\kk \in \bbN$, the transformation \eqref{2.2} 
(i.e., $x = e^{t}$, $x \in (e_N,\infty)$, $t \in (e_{\kk-1}, \infty)$) yields
\begin{align}\lb{2.13}
\sum_{k=1}^{\kk} \prod_{j=1}^{k} [\ln_{j}(x)]^{-2} = t^{-2} + t^{-2}\sum_{k=1}^{\kk - 1} \prod_{j=1}^{k} [\ln_{j}(t)]^{-2}, 
\end{align}
interpreting $\sum_{k=1}^{0}(\, \cdot \, ) =0$.

\section{Power-Weighted Birman--Hardy--Rellich-type Inequalities with Logarithmic Refinements} \lb{s3}

In this section we now establish several improvements of existing power-weighted Birman--Hardy--Rellich  inequalities in the literature by employing the combined Hartman--M\"ueller-Pfeiffer variable transformation from section \ref{s2} in a crucial (and new) manner. These weighted inequalities are proved for both types of iterated logarithms 
$\ln_{j}(\, \cdot \,), \,j \in \bbN$ and $L_{j}(\, \cdot \,), \, j \in \bbN$, and are given on both the exterior interval $(\rho, \infty)$ and interior interval $(0,\rho)$ for any $\rho \in (0,\infty)$.

The principal result of this paper then reads as follows: 

\begin{theorem}\lb{t3.1}
Let $\ell, m , \kk \in \bbN, \alpha \in \bbR,$ and $\rho, \gamma, \tau \in (0,\infty)$. The following hold:\\[1mm]
$(i)$ If $\rho \geq e_{\kk} \gamma$ and $1 \leq \ell \leq m $, then for all $f \in C_{0}^{\infty}((\rho, \infty))$,
\begin{align}\lb{3.33}
&\int_{\rho}^{\infty} dx \, x^{\alpha} \big| f^{(m )}(x) \big|^{2}
\geq A(\ell, \alpha) \int_{\rho}^{\infty} dx \,  x^{\alpha - 2\ell} \big|f^{(m  - \ell)}(x)\big|^{2}   \no \\
&\quad+ B(\ell,\alpha) \sum_{k=1}^{\kk} \int_{\rho}^{\infty}  dx \, x^{\alpha - 2\ell} \prod_{p=1}^{k} [\ln_{p}(x/\gamma)]^{-2} \big|f^{(m  - \ell)}(x)\big|^{2}    \\
&\quad+ \sum_{j=2}^{\ell} |c_{2j}(\ell,\alpha)| A(j,0) \int_{\rho}^{\infty}  dx \, x^{\alpha - 2\ell} [\ln(x/\gamma)]^{-2j} 
\big|f^{(m  - \ell)}(x)\big|^{2}   \no  \\
&\quad + \sum_{j=2}^{\ell} |c_{2j}(\ell,\alpha)| B(j,0) \sum_{k=1}^{\kk - 1} \int_{\rho}^{\infty} dx \, 
x^{\alpha - 2\ell} [\ln(x/\gamma)]^{-2j}\no \\
&\hspace*{5.6cm} \times \prod_{p=1}^{k} [\ln_{p+1}(x/\gamma)]^{-2} \big|f^{(m  - \ell)}(x)\big|^{2}. \no 
\end{align}
$(ii)$ If $\rho \geq \tau$ and $1 \leq \ell \leq m $, then for all $f \in C_{0}^{\infty}((\rho, \infty))$,
\begin{align}\lb{3.34}
&\int_{\rho}^{\infty} dx \, x^{\alpha} \big| f^{(m )}(x) \big|^{2}
\geq A(\ell, \alpha) \int_{\rho}^{\infty} dx \,  x^{\alpha - 2\ell} \big|f^{(m  - \ell)}(x)\big|^{2}  \no \\
&\quad+ B(\ell,\alpha) \sum_{k=1}^{\kk} \int_{\rho}^{\infty} dx \, x^{\alpha - 2\ell} \prod_{p=1}^{k} [L_{p}(\tau/x)]^2 
\big|f^{(m  - \ell)}(x)\big|^{2}  \\
&\quad+ \sum_{j=2}^{\ell} |c_{2j}(\ell,\alpha)| A(j,0) \int_{\rho}^{\infty}  dx \, x^{\alpha - 2\ell} 
[L_{1}(\tau/x)]^{2j} \big|f^{(m  - \ell)}(x)\big|^{2}  \no  \\
&\quad+ \sum_{j=2}^{\ell} |c_{2j}(\ell,\alpha)|  B(j,0) \sum_{k=1}^{\kk -1} \int_{\rho}^{\infty}  dx \, x^{\alpha - 2\ell} 
[L_{1}(\tau/x)]^{2j}  \no \\ 
& \hspace{5.6cm} \times \prod_{p=1}^{k} [L_{p+1}(\tau/x)]^2 \big|f^{(m  - \ell)}(x)\big|^{2}.  \no 
\end{align}
$(iii)$ If $\gamma \geq e_{\kk} \rho$ and $1 \leq \ell \leq m $, then for all $f \in C_{0}^{\infty}((0, \rho))$,
\begin{align}\lb{3.35}
&\int_{0}^{\rho} dx \, x^{\alpha} \big| f^{(m )}(x) \big|^{2}
\geq A(\ell, \alpha) \int_{0}^{\rho} dx \,  x^{\alpha - 2\ell} \big|f^{(m  - \ell)}(x)\big|^{2}   \no \\
&\quad+ B(\ell,\alpha) \sum_{k=1}^{\kk} \int_{0}^{\rho}  dx \, x^{\alpha - 2\ell} 
\prod_{p=1}^{k} [\ln_{p}(\gamma/x)]^{-2} \big|f^{(m  - \ell)}(x)\big|^{2} \\
&\quad+ \sum_{j=2}^{\ell} |c_{2j}(\ell,\alpha)| A(j,0) \int_{0}^{\rho}  dx \, x^{\alpha - 2\ell} 
[\ln(\gamma/x)]^{-2j} \big|f^{(m  - \ell)}(x)\big|^{2}  \no  \\
&\quad+ \sum_{j=2}^{\ell} |c_{2j}(\ell,\alpha)| B(j,0) \sum_{k=1}^{\kk - 1}\int_{0}^{\rho} dx \, 
x^{\alpha - 2\ell} [\ln(\gamma/x)]^{-2j} \no \\
&\hspace*{5.5cm} \times \prod_{p=1}^{k} [\ln_{p+1}(\gamma/x)]^{-2} \big|f^{(m  - \ell)}(x)\big|^{2}. \no 
\end{align}
$(iv)$ If $\tau \geq \rho$ and $1 \leq \ell \leq m $, then for all $f \in  C_{0}^{\infty}((0, \rho))$,
\begin{align}\lb{3.36}
&\int_{0}^{\rho} dx \, x^{\alpha} \big| f^{(m )}(x) \big|^{2}
\geq A(\ell, \alpha)\int_{0}^{\rho}  dx \,  x^{\alpha - 2\ell} \big|f^{(m  - \ell)}(x)\big|^{2} \no \\
&\quad+ B(\ell,\alpha) \sum_{k=1}^{\kk} \int_{0}^{\rho} dx \, x^{\alpha - 2\ell} \prod_{p=1}^{k} 
[L_{p}(x/\tau)]^2 \big|f^{(m  - \ell)}(x)\big|^{2} \\
&\quad+ \sum_{j=2}^{\ell} |c_{2j}(\ell,\alpha)| A(j,0) \int_{0}^{\rho} dx \, x^{\alpha - 2\ell} 
[L_{1}(x/\tau)]^{2j} \big|f^{(m  - \ell)}(x)\big|^{2}  \no  \\
&\quad+ \sum_{j=2}^{\ell} |c_{2j}(\ell,\alpha)|  B(j,0) \sum_{k=1}^{\kk - 1}\int_{0}^{\rho} dx \, x^{\alpha - 2\ell} 
[L_{1}(x/\tau)]^{2j}  \prod_{p=1}^{k} [L_{p+1}(x/\tau)^2 \big|f^{(m  - \ell)}(x)\big|^{2}.  \no 
\end{align}
$(v)$ Inequalities \eqref{3.33}--\eqref{3.36} are strict for $f \not \equiv 0$ on $(\rho,\infty)$, respectively,  
$(0,\rho)$. \\[1mm]
$(vi)$ In the exceptional cases $\alpha \in \{2j - 1\}_{1 \leq j \leq \ell}$ $($i.e., if and only if 
$A(\ell,\alpha) = 0$$)$, the first terms containing $A(\ell,\alpha)$ on the right-hand sides of 
\eqref{3.33}--\eqref{3.36} are to be deleted.
\end{theorem}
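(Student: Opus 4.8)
The plan is to collapse all four inequalities, together with strictness, to a single half-line identity furnished by the combined Hartman--Müller-Pfeiffer transformation of Section~\ref{s2}, the only genuinely analytic input being a scalar first-order iterated-logarithm Hardy inequality.

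\emph{Two reductions.} First, observe that the right-hand sides of \eqref{3.33}--\eqref{3.36} involve $f$ only through $f^{(m-\ell)}$, whereas the left-hand side involves $f^{(m)}=\big(f^{(m-\ell)}\big)^{(\ell)}$. Writing $g=f^{(m-\ell)}\in C_0^\infty$ on the same interval, the assertion for the pair $(m,\ell)$ is precisely the assertion for $(\ell,\ell)$ applied to $g$; hence it suffices to treat $m=\ell$. Second, since every term of \eqref{3.33} carries the same weight $x^{\alpha-2\ell}$ against the scale-invariant factors $\ln_{p}(x/\gamma)$, the rescaling $y=x/\gamma$, $\tilde f(y)=f(\gamma y)$, multiplies every term by the common constant $\gamma^{\alpha+1-2m}$ and reduces us to $\gamma=1$; for \eqref{3.33} the substitution $x=e^{t}$ of \eqref{2.2} then maps $(\rho,\infty)$, $\rho\ge e_{\kk}$, into $(e_{\kk-1},\infty)$, and \eqref{2.13} records how the logarithmic sum transforms. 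The interior statements \eqref{3.35}--\eqref{3.36} reduce to the exterior ones through the reflection $x\mapsto 1/x$ (only even powers of the logarithms occur, so orientation is immaterial), and the normalized cases \eqref{3.34}, \eqref{3.36} are handled by the shifted exponential substitution adapted to \eqref{1.4}, whose virtue is that each $L_{j}$ stays in $(0,1]$, so no constraint $e_{\kk}$ is needed and $\kk=\infty$ becomes admissible.

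\emph{The transform identity.} Integrating $\int x^{\alpha}\big|f^{(m)}\big|^2\,dx$ by parts $m$ times (boundary terms vanish by compact support) gives $(-1)^{m}\int\big(x^{\alpha}f^{(m)}\big)^{(m)}\,\overline f\,dx$. Substituting \eqref{2.3}, the factorization $\overline f=e^{[(2m-1-\alpha)/2]t}\,\overline w$ from \eqref{2.2a}, and $dx=e^{t}\,dt$, all exponential prefactors cancel, the odd coefficients drop by \eqref{2.4}$(i)$, and $(-1)^{m+j}c_{2j}=|c_{2j}|$ by \eqref{2.4}$(ii)$, yielding
\[
\int x^{\alpha}\big|f^{(m)}(x)\big|^{2}\,dx=\sum_{j=0}^{m}\big|c_{2j}(m,\alpha)\big|\int\big|w^{(j)}(t)\big|^{2}\,dt .
\]
By \eqref{2.4}$(iii)$--$(v)$ the $j=0$ summand is $A(m,\alpha)\int|w|^2$ (which converts back to the leading $A(\ell,\alpha)$-term), the $j=1$ summand carries $|c_{2}|=4B(m,\alpha)$, and the top summand has coefficient $1$.

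\emph{Base inequalities and reassembly.} It remains to bound each $\int|w^{(j)}|^2\,dt$, $j\ge 1$, from below. Applying the transformation once more at order $j$ with $\alpha=0$, discarding the nonnegative summands of order $\ge 2$ it produces, and invoking the first-order iterated-logarithm Hardy inequality
\[
\int_{c}^{\infty}|u'(t)|^{2}\,dt\ge\tfrac14\int_{c}^{\infty}\frac{|u(t)|^{2}}{t^{2}}\bigg(1+\sum_{k=1}^{\kk-1}\prod_{p=1}^{k}[\ln_{p}(t)]^{-2}\bigg)\,dt,\qquad c\ge e_{\kk-1},
\]
(proved by the ground-state substitution $u=\phi v$ with $\phi(t)=t^{1/2}\prod_{k=1}^{\kk-1}[\ln_{k}(t)]^{1/2}$ and a Picone-type completion of squares) delivers, after the reindexing $\ln_{p}\mapsto\ln_{p+1}$ recorded in \eqref{2.13}, exactly
\[
\int|w^{(j)}|^{2}\,dt\ge A(j,0)\int t^{-2j}|w|^{2}\,dt+B(j,0)\sum_{k=1}^{\kk-1}\int t^{-2j}\prod_{p=1}^{k}[\ln_{p}(t)]^{-2}|w|^{2}\,dt .
\]
Inserting the $j=1$ bound into $4B(\ell,\alpha)\int|w'|^2$ produces the $B(\ell,\alpha)$-chain of \eqref{3.33} via \eqref{2.13}, while the bounds for $2\le j\le\ell$, weighted by $|c_{2j}(\ell,\alpha)|$, produce the $A(j,0)$- and $B(j,0)$-terms with the shifted logarithms $\ln_{p+1}$; transforming back to $x$ and undoing the scaling reconstitutes the full right-hand side. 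Strictness $(v)$ follows from the equality case of the Hardy engine (equality forces $\int\phi^2|v'|^2=0$, hence $w\equiv 0$, hence $f\equiv 0$), and the coefficient $4B(\ell,\alpha)$ of the $j=1$ term is positive for every $\alpha\in\bbR$. The exceptional case $(vi)$ is immediate: if $\alpha\in\{2j-1\}_{1\le j\le\ell}$ then a factor of \eqref{1.6} vanishes, so $|c_{0}|=A(\ell,\alpha)=0$ and the leading term is simply absent, while $B(\ell,\alpha)$ remains strictly positive.

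\emph{Main obstacle.} The principal difficulty is bookkeeping rather than a single hard estimate: one must verify that the reindexing \eqref{2.13} threads correctly through both applications of the transformation, so that the level truncations (the chain runs to $\kk$ in the $B(\ell,\alpha)$-term but only to $\kk-1$ in the $B(j,0)$-terms), the shift $\ln_{p}\mapsto\ln_{p+1}$, and the precise constants $A(j,0)$, $B(j,0)$, $|c_{2j}(\ell,\alpha)|$ all emerge exactly as stated, uniformly across the interior/exterior and $\ln_{j}$/$L_{j}$ variants (including the limiting case $\kk=\infty$, which additionally requires justifying convergence of the logarithmic series for the $L_{j}$-chain).
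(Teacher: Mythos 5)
Your skeleton coincides with the paper's: reduce to $\ell=m$, scale $\gamma$ (resp.\ $\tau$) to $1$, apply the Hartman--M\"uller-Pfeiffer substitution \eqref{2.2}, and convert $\int x^{\alpha}|f^{(m)}|^{2}\,dx$ into $\sum_{j=0}^{m}|c_{2j}(m,\alpha)|\int|w^{(j)}|^{2}\,dt$ using \eqref{2.4} and \eqref{2.13} --- this is exactly the identity \eqref{3.6}. Where you diverge is in how the terms $\int|w^{(j)}|^{2}$ are bounded below: the paper runs an induction over $\kk$, reapplying \eqref{3.6} at level $\kk-1$ with $\alpha=0$ and anchoring the base case $\kk=1$ in Birman's classical inequality \eqref{1.1}, whereas you apply the transform a second time at order $j$ with $\alpha=0$, discard the nonnegative higher-order summands, and feed the surviving $4B(j,0)\int|v'|^{2}$ into a first-order iterated-logarithm Hardy inequality proved directly by the ground-state substitution $u=\phi v$, $\phi(t)=t^{1/2}\prod_{k}[\ln_{k}(t)]^{1/2}$. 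Your route buys a self-contained analytic engine (one explicit completion of squares, which also makes strictness transparent, since equality forces $\int\phi^{2}|v'|^{2}=0$) at the cost of verifying the Hartman identity $-\phi''/\phi=\tfrac14 t^{-2}\big(1+\sum_{k}\prod_{p}[\ln_{p}(t)]^{-2}\big)$; the paper's induction avoids that computation entirely by bootstrapping through the theorem itself, needing only \eqref{1.1} as external input. The bookkeeping you worry about does close up correctly: the chain lengths $\kk$ versus $\kk-1$ and the shift $\ln_{p}\mapsto\ln_{p+1}$ come out of \eqref{2.13} exactly as in \eqref{3.5}--\eqref{3.5b}.

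Two points need tightening. First, the transformation of Section \ref{s2} is derived under the restriction $\alpha\notin\{1,\dots,2m-1\}$ (condition \eqref{2.1}); you should either note that the resulting identity extends to all $\alpha$ by polynomial continuity of $A(m,\alpha)$, $B(m,\alpha)$, $c_{2j}(m,\alpha)$, or take limits $\alpha\to k$ as the paper does, before addressing item $(vi)$. Second, the literal pullback $f\mapsto f(1/\cdot)$ does \emph{not} carry $\int_{0}^{\rho}x^{\alpha}|f^{(m)}|^{2}\,dx$ into an exterior integral of the same form (the chain rule mixes lower-order derivatives), so the interior cases cannot be reduced to the exterior ones by reflecting in $x$; the reflection must be performed in the logarithmic variable, i.e., one substitutes $y=e^{-t}$ as in \eqref{3.21}, where the evenness of $P_{m,\alpha}$ (property $(i)$ of \eqref{2.4}) makes the orientation immaterial, which is presumably what your parenthetical intends.
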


We break up the proof of Theorem \ref{t3.1} into four parts. For simplicity, we present the proof in the 
special case $\ell = m$; the general case follows upon replacing $f$ by $f^{(m-\ell)}$ for $\ell = 1, \dots, m $. 

\begin{proof}[Proof of Theorem \ref{t3.1}\,$(i)$] 
Let  $\rho \geq e_{\kk} \gamma$, pick any $f \in C_{0}^{\infty}((\rho, \infty))$, and assume 
that $\alpha \in \bbR$ satisfies \eqref{2.1}. The scaling
\begin{equation}\lb{3.18b}
x = \gamma y, \quad  dx = \gamma dy, \quad g(y) = f(\gamma y), 
\quad y \in (\rho/\gamma, \infty) \subseteq (e_N,\infty), 
\end{equation} 
implies $g \in C_{0}^{\infty}((\rho/\gamma, \infty))$. Applying the transformation \eqref{2.2}, \eqref{2.2a} to $g$, 
that is, employing 
\begin{align}
\begin{split} 
& x/\gamma = y = e^t, \quad dx/\gamma = dy = e^t dt, \quad t \in (\ln(\rho/\gamma),\infty),  \\
& f(x) = g(y) = e^{[(2m -1 - \alpha)/2]t} w(t), \quad w \in C_0^{\infty}((\ln(\rho/\gamma),\infty)), 
\end{split}
\end{align} 
then yields
\begin{align}\lb{3.3}
\big(y^{\alpha} g^{(m )}(y) \big)^{(m )} = e^{-[(2m + 1 - \alpha)/2]t} \sum_{j = 0}^{m } (-1)^{m  - j}|c_{2j}(m ,\alpha)| w^{(2j)}(t),
\end{align}
for $t \in (\ln(\rho/\gamma),\infty)) \subseteq (e_{\kk-1}, \infty)$, and $c_{2j}(m , \alpha)$ as in \eqref{2.4}.
Thus,
\begin{align}\lb{3.4}
(-1)^{m } \big(y^{\alpha} g^{(m )}(y) \big)^{(m )} \ol{g(y)} = e^{-t} \sum_{j = 0}^{m } (-1)^{2m  - j}|c_{2j}(m ,\alpha)| w^{(2j)}(t) \ol{w(t)}.
\end{align}
 Furthermore, \eqref{2.2}, \eqref{2.2a}, and \eqref{2.13} yield
 \begin{align}\lb{3.5}
&y^{\alpha - 2m } |g(y)|^{2} = e^{-t} |w(t)|^{2},   \\
 &y^{\alpha - 2m }\sum_{k=1}^{\kk}\prod_{p=1}^{k}[\ln_{p}(y)]^{-2}|g(y)|^{2}
  = e^{-t} \bigg\{ t^{-2}|w(t)|^{2} + t^{-2}\sum_{k=1}^{\kk - 1}\prod_{p=1}^{k}[\ln_{p}(t)]^{-2}|w(t)|^{2}  \bigg\}, \no 
 \end{align}
 and for $j = 2, \dots, m $,
 \begin{align}\lb{3.5b}
 &y^{\alpha - 2m }[\ln(y)]^{-2j}|g(y)|^{2} = e^{-t} t^{-2j} |w(t)|^{2},  \\
 &y^{\alpha - 2m } [\ln(y)]^{-2j} \sum_{k=1}^{\kk-1}  \prod_{p=1}^{k}[\ln_{p+1}(y)]^{-2}|g(y)|^{2} = e^{-t} t^{-2j}\sum_{k=1}^{\kk-1} \prod_{p=1}^{k}[\ln_{p}(t)]^{-2}|w(t)|^{2}. \no
 \end{align} 
Employing the elementary identity, 
\begin{align}\lb{2.14}
\begin{split}
\int_{a}^{b} dx \, x^{\alpha} \big| f^{(m )}(x) \big|^{2} = (-1)^{m } \int_{a}^{b} dx \, \big(x^{\alpha} f^{(m )}(x) \big)^{(m )}\ol{f(x)},& \\
m  \in \bbN, \; \alpha \in \bbR, \; f \in C_{0}^{\infty}((a,b)), \; 0 \leq a < b \leq \infty,
\end{split} 
\end{align}
and items $(iii)$, $(iv)$ of \eqref{2.4}, it follows from \eqref{3.18b}--\eqref{3.5b} that
\begin{align} 
&\int_{\rho}^{\infty} dx \, \bigg\{ x^{\alpha} \big| f^{(m )}(x) \big|^{2} - A(m ,\alpha) x^{\alpha - 2m } |f(x)|^{2} \no \\
&\hspace*{1.4cm} - B(m ,\alpha) x^{\alpha - 2m }\sum_{k=1}^{\kk}\prod_{p=1}^{k}[\ln_{p}(x/\gamma)]^{-2}|f(x)|^{2} \no \\
&\hspace*{1.4cm} - \sum_{j=2}^{m } |c_{2j}(m ,\alpha)| A(j,0) x^{\alpha - 2m } [\ln(x/\gamma)]^{-2j} |f(x)|^{2}  
\no  \\
&\hspace*{1.4cm} -  \sum_{j=2}^{m } |c_{2j}(m ,\alpha)|  B(j,0)  x^{\alpha - 2m } [\ln(x/\gamma)]^{-2j} \sum_{k=1}^{\kk - 1}  \prod_{p=1}^{k} [\ln_{p+1}(x/\gamma)]^{-2}|f(x)|^{2} \bigg\} \no \\
&\quad = \gamma^{\alpha - 2m  + 1} \int_{\rho/\gamma}^{\infty} dy \, \bigg\{ y^{\alpha} \big| g^{(m )}(y) \big|^{2} - A(m ,\alpha) y^{\alpha - 2m } |g(y)|^{2} \no \\
&\hspace*{3.5cm} - B(m ,\alpha) y^{\alpha - 2m }\sum_{k=1}^{\kk}\prod_{p=1}^{k}[\ln_{p}(y)]^{-2}|g(y)|^{2} \no \\
&\hspace*{3.5cm} - \sum_{j=2}^{m } |c_{2j}(m ,\alpha)| A(j,0) y^{\alpha - 2m } [\ln(y)]^{-2j} |g(y)|^{2}  \no  \\
&\hspace*{2.2cm} -  \sum_{j=2}^{m } |c_{2j}(m ,\alpha)|  B(j,0)  y^{\alpha - 2m } [\ln(y)]^{-2j} \sum_{k=1}^{\kk - 1}  \prod_{p=1}^{k} [\ln_{p+1}(y)]^{-2}|g(y)|^{2} \bigg\} \no \\
&\quad = \gamma^{\alpha - 2m  + 1}  
\bigg \{ \sum_{j=0}^{m }|c_{2j}(m ,\alpha)| \int_{\ln(\rho/\gamma)}^{\infty} dt \, \big|w^{(j)}(t) \big|^{2} - A(m ,\alpha) \int_{\ln(\rho/\gamma)}^{\infty} dt \,  |w(t)|^{2} \no \\
&\hspace*{2.4cm} - B(m ,\alpha)\int_{\ln(\rho/\gamma)}^{\infty} dt \, t^{-2}|w(t)|^{2}    \no \\ 
&\hspace*{2.4cm} - B(m ,\alpha)\sum_{k=1}^{\kk - 1} \int_{\ln(\rho/\gamma)}^{\infty} dt \, t^{-2}\prod_{p=1}^{k}[\ln_{p}(t)]^{-2}|w(t)|^{2} \no \\
&\hspace*{2.4cm} 
- \sum_{j=2}^{m } |c_{2j}(m ,\alpha)| A(j,0) \int_{\ln(\rho/\gamma)}^{\infty} dt \, t^{-2j} |w(t)|^{2}  \no \\
&\hspace*{2.4cm} -  \sum_{j=2}^{m } |c_{2j}(m ,\alpha)|   B(j,0) \sum_{k=1}^{\kk - 1} 
\int_{\ln(\rho/\gamma)}^{\infty} dt \, t^{-2j} \prod_{p=1}^{k} [\ln_{p}(t)]^{-2}|w(t)|^{2} \bigg \}  \no \\
& \quad =  \gamma^{\alpha - 2m  + 1}  
\bigg \{  \sum_{j=1}^{m }|c_{2j}(m ,\alpha)| \int_{\ln(\rho/\gamma)}^{\infty} dt \, \big|w^{(j)}(t) \big|^{2} \no \\
&\hspace*{2.4cm} 
- \sum_{j=1}^{m } |c_{2j}(m ,\alpha)| A(j,0) \int_{\ln(\rho/\gamma)}^{\infty} dt \, t^{-2j} |w(t)|^{2}  \no \\
&\hspace*{2.4cm} -  \sum_{j=1}^{m } |c_{2j}(m ,\alpha)|   
B(j,0) \sum_{k=1}^{\kk - 1} \int_{\ln(\rho/\gamma)}^{\infty} dt \, t^{-2j} 
\prod_{p=1}^{k} [\ln_{p}(t)]^{-2}|w(t)|^{2} \bigg\} \no \\
& \quad = \gamma^{\alpha - 2m  + 1} \sum_{j=1}^{m }|c_{2j}(m ,\alpha)| 
\bigg\{ \int_{\ln(\rho/\gamma)}^{\infty} dt \, \big|w^{(j)}(t) \big|^{2} 
- A(j,0) \int_{\ln(\rho/\gamma)}^{\infty} dt \, t^{-2j} |w(t)|^{2}  \no \\
&\hspace*{4.6cm} - B(j,0) \sum_{k=1}^{\kk - 1} 
\int_{\ln(\rho/\gamma)}^{\infty} dt \, t^{-2j} \prod_{p=1}^{k} [\ln_{p}(t)]^{-2}|w(t)|^{2} \bigg\},   \no \\
& \hspace*{7.5cm} w \in C_0^{\infty}((\ln(\rho/\gamma),\infty)),   \lb{3.6} 
\end{align}
interpreting $\sum_{k=1}^{0}(\, \cdot \,) = 0$.

Hence, part $(i)$, for $\alpha \in \bbR \backslash \{j \,|\, 1 \leq j \leq 2m-1\}$, follows via induction over $\kk \in \bbN$. Indeed, for $\kk = 1$ equality \eqref{3.6} yields (cf.\ \eqref{2.2a}) 
\begin{align}\lb{3.6b}
&\int_{\rho}^{\infty} dx \, \bigg\{ x^{\alpha} \big| f^{(m )}(x) \big|^{2} - A(m ,\alpha) x^{\alpha - 2m } |f(x)|^{2}-  B(m ,\alpha) x^{\alpha - 2m }[\ln(x/\gamma)]^{-2}|f(x)|^{2} \no \\
&\qquad \qquad - \sum_{j=2}^{m } |c_{2j}(m ,\alpha)| A(j,0) x^{\alpha - 2m } [\ln(x/\gamma)]^{-2j} |f(x)|^{2}  \bigg\} \no \\
& \quad = \gamma^{\alpha - 2m  + 1} \sum_{j=1}^{m }|c_{2j}(m ,\alpha)| 
\bigg\{ \int_{\ln(\rho/\gamma)}^{\infty} dt \, \big|w^{(j)}(t) \big|^{2} - A(j,0) \int_{\ln(\rho/\gamma)}^{\infty} dt \, t^{-2j} |w(t)|^{2}  \bigg\} \no  \\
& \quad \geq 0, \quad w \in C_0^{\infty}((\ln(\rho/\gamma),\infty)), 
\end{align}
by \eqref{1.1} as a sum of unweighted Birman--Hardy--Rellich-type inequalities. 
Assuming \eqref{3.33} holds for $\kk-1 \in \bbN$ then reapplying \eqref{3.6} proves \eqref{3.33} for $\kk \in \bbN$. Strictness also follows by induction over $\kk \in \bbN$ since $f \not \equiv 0$ implies $w \not \equiv 0$ by \eqref{2.2}, \eqref{2.2a} so that \eqref{3.6b}, and by induction, \eqref{3.6} is strictly positive. 

The case $\alpha \in \{j \,|\, 1 \leq j \leq 2m-1\}$ then follows by taking the limits $\alpha \to k \in\{j \,|\, 1 \leq j \leq 2m-1\}$, noting that $A(m , \alpha)$, $B(m ,\alpha)$, and $c_{2j}(m ,\alpha)$ are continuous as polynomials in $\alpha \in \bbR$. This completes the proof of part $(i)$.
\end{proof} 

\begin{proof}[Proof of Theorem \ref{t3.1}\,$(ii)$]
By taking limits as in part $(i)$, it suffices once more to consider 
$\alpha \in \bbR \backslash  \{2j-1\}_{1 \leq j \leq m}$. 
Let $\rho \geq \tau$ and pick any $f \in C_{0}^{\infty}((\rho,\infty))$. The scaling
\begin{equation}\lb{3.18c}
x = \tau y, \quad  dx = \tau dy, \quad  g(y) = f(\tau y), \quad y \in (\rho/\tau, \infty), 
\end{equation} 
yields $g \in C_{0}^{\infty}((\rho/\tau, \infty)) \subseteq  C_{0}^{\infty}((1, \infty))$.
One modifies the transformation \eqref{2.2}, \eqref{2.2a} applied to $g$ by 
\begin{align}\lb{3.7}
\begin{split}
&y = e^{t - 1}, \quad  dy = e^{t-1}dt, \quad  t \in (1, \infty), \\
&g(y) \equiv g(e^{t-1}) = e^{[(2m - 1 - \alpha)/2](t-1)}v(t), \quad v \in C_{0}^{\infty}((1,\infty)), 
\end{split}
\end{align}
where $v$ is given by
\begin{equation}
v(t) := w(t-1), \quad t \in (1, \infty),
\end{equation}
with $w \in C_{0}^{\infty}((0,\infty))$.
Setting
\begin{equation}
s = t - 1, \quad ds = dt,
\end{equation}
 and noting
\begin{equation}
\frac{d}{dt}v(t) = \frac{d}{ds}w(s),
\end{equation}
yields, similarly to \eqref{3.3},
\begin{align}\lb{3.8}
\begin{split}
&\big(y^{\alpha} g^{(m )}(y) \big)^{(m )} 
= e^{-[(2m + 1 - \alpha)/2]s} \sum_{\ell = 0}^{2m } c_{\ell}(m ,\alpha) w^{(\ell)}(s) \\
&\quad = e^{-[(2m + 1 - \alpha)/2](t-1)} \sum_{\ell = 0}^{2m } c_{\ell}(m ,\alpha) v^{(\ell)}(t). 
\end{split}
\end{align}
Hence, an analogous argument as in section \ref{s2} shows the constants $c_{\ell}(m ,\alpha)$ satisfy $(i)$--$(v)$ in \eqref{2.4} as before.
Therefore by \eqref{3.8},
\begin{align}\lb{3.9}
&(-1)^{m } \big(y^{\alpha} g^{(m )}(y) \big)^{(m )} \ol{g(y)} =e^{1-t}\sum_{j = 0}^{m } (-1)^{2m  - j}|c_{2j}(m ,\alpha)| v^{(2j)}(t) \ol{v(t)}.
\end{align}
Now, \eqref{3.7} yields
\begin{align}
L_{1}(1/y) &= \big(1 - \ln(1/y) \big)^{-1} = \big(1 - \ln(e^{1-t}) \big)^{-1} = t^{-1},
\end{align}
and
\begin{align}
L_{2}(1/y) \! &= \! L_{1}(L_{1}(1/y)) \! = \! L_{1}(1/t).
\end{align}
Inductively, we see that
\begin{align}\lb{3.10}
L_{1}(1/y)  = t^{-1}, \quad L_{j}(1/y) = L_{j-1}(1/t), \quad j=2,3,\dots
\end{align}
Hence, 
 \begin{align}\lb{3.11a}
 &y^{\alpha - 2m } |g(y)|^{2} = e^{1-t} |v(t)|^{2},   \\
 &y^{\alpha - 2m }\sum_{k=1}^{\kk}\prod_{p=1}^{k}L_{p}^{2}(1/y)|g(y)|^{2}
 = e^{1-t} \bigg\{ t^{-2}|v(t)|^{2} + t^{-2}\sum_{k=1}^{\kk - 1}\prod_{p=1}^{k}L_{p}^{2}(1/t)|v(t)|^{2}  \bigg\}, \no 
 \end{align}
 and for $j = 2, \dots, m $,
 \begin{align}\lb{3.11b}
 &y^{\alpha - 2m }L_{1}^{2j}(1/y)|g(y)|^{2} = e^{1-t} t^{-2j} |v(t)|^{2},  \\
 &y^{\alpha - 2m } L_{1}^{2j}(1/y) \sum_{k=1}^{\kk-1}  \prod_{p=1}^{k}L_{p+1}^{2}(1/y)|g(y)|^{2} = e^{1-t} t^{-2j}\sum_{k=1}^{\kk-1} \prod_{p=1}^{k}L_{p}^{2}(1/t)|v(t)|^{2}. \no
 \end{align} 
 Again recalling \eqref{2.14} and $(iii)$--$(iv)$ of \eqref{2.4}, \eqref{3.9}, \eqref{3.11a}, and \eqref{3.11b} yield 
 (cf.\ \eqref{3.7})
\begin{align}
&\int_{\rho}^{\infty} dx \, \bigg\{ x^{\alpha} \big| f^{(m )}(x) \big|^{2} - A(m ,\alpha) x^{\alpha - 2m } |f(x)|^{2} \no \\
&\hspace*{1.4cm} - B(m ,\alpha) x^{\alpha - 2m }\sum_{k=1}^{\kk}\prod_{p=1}^{k}L_{p}^{2}(\tau/x)|f(x)|^{2} \no \\
&\hspace*{1.4cm} - \sum_{j=2}^{m } |c_{2j}(m ,\alpha)| A(j,0) x^{\alpha - 2m } L_{1}^{2j}(\tau/x)  |f(x)|^{2}  \no  \\
&\hspace*{1.4cm} - \sum_{j=2}^{m } |c_{2j}(m ,\alpha)|  B(j,0)  x^{\alpha - 2m } L_{1}^{2j}(\tau/x)\sum_{k=1}^{\kk-1}  \prod_{p=1}^{k}L_{p+1}^{2}(\tau/x)|f(x)|^{2} \bigg\} \no \\
& \quad = \tau^{\alpha-2m +1}\sum_{j=1}^{m }|c_{2j}(m ,\alpha)| \bigg\{\int_{1}^{\infty} dt \, \big|v^{(j)}(t) \big|^{2} - A(j,0) \int_{1}^{\infty}dt \, t^{-2j} |v(t)|^{2}    \no \\
&\hspace*{4.6cm} - B(j,0) \sum_{k=1}^{\kk - 1} \int_{1}^{\infty} dt \, t^{-2j} \prod_{p=1}^{k} L_{p}^{2}(1/t)|v(t)|^{2} \bigg\}, \no \\
& \hspace*{8.4cm}  v \in C_0^{\infty}((1,\infty)),    \lb{3.26a} 
\end{align}
and the proof again follows by induction over $\kk \in \bbN$.
\end{proof}

\begin{proof}[Proof of Theorem \ref{t3.1}\,$(iii)$]
Consider again $\alpha \in \bbR \backslash \{j \,|\, 1 \leq j \leq 2m-1\}$. Let $\gamma \geq e_{\kk} \rho$ 
and pick any $f \in C_{0}^{\infty}((0,\rho))$. The scaling
\begin{equation}\lb{3.27}
x = \gamma y, \quad    dx = \gamma dy, \quad  y \in (0, \rho/\gamma), \quad g(y) = f(\gamma y),
\end{equation} 
yields $g \in C_{0}^{\infty}((0, \rho/\gamma))$. Slightly modifying the transformation \eqref{2.2}, \eqref{2.2a} 
applied to $g$ leads to 
\begin{align}\lb{3.21}
\begin{split}
&y = e^{-t}, \quad  dy = -e^{-t}dt, \quad  t \in (\ln(\gamma/ \rho), \infty), \\
&g(y) \equiv g(e^{-t}) = e^{-[(2m - 1 - \alpha)/2]t}u(t), \quad u \in C_{0}^{\infty}((\ln(\gamma/ \rho), \infty)).
\end{split}
\end{align}
This implies 
\begin{align}\lb{3.22}
&\big(y^{\alpha} g^{(m )}(y) \big)^{(m )} 
 = e^{[(2m + 1 - \alpha)/2]t} \sum_{\ell = 0}^{2m } (-1)^{\ell} {c_{\ell}}(m ,\alpha) u^{(\ell)}(t),  
\end{align}
and hence, $(i)$--$(v)$ in \eqref{2.4} still hold. Thus, 
\begin{align}\lb{3.23}
&(-1)^{m } \big(y^{\alpha} g^{(m )}(y) \big)^{(m )} \ol{g(y)} =e^{t}\sum_{j = 0}^{m } (-1)^{2m  - j}|c_{2j}(m ,\alpha)|u^{(2j)}(t) \ol{u(t)}.
\end{align}
 Furthermore, 
 \begin{align}
&y^{\alpha - 2m } |g(y)|^{2} = e^{t} |u(t)|^{2},      \lb{3.24} \\
&y^{\alpha - 2m }\sum_{k=1}^{\kk}\prod_{p=1}^{k}[\ln_{p}(1/y)]^{-2}|g(y)|^{2} = e^{t} \bigg\{ t^{-2}|u(t)|^{2} + t^{-2}\sum_{k=1}^{\kk - 1}\prod_{p=1}^{k}[\ln_{p}(t)]^{-2}|u(t)|^{2}  \bigg\}, \no 
 \end{align}
 and for $j = 2, \dots, m $,
 \begin{align}\lb{3.25}
 &y^{\alpha - 2m }[\ln(1/y)]^{-2j}|g(y)|^{2} = e^{t} t^{-2j} |u(t)|^{2},  \\
 &y^{\alpha - 2m } [\ln(1/y)]^{-2j} \sum_{k=1}^{\kk-1}  \prod_{p=1}^{k}[\ln_{p+1}(1/y)]^{-2}|g(y)|^{2} = e^{t} t^{-2j}\sum_{k=1}^{\kk-1} \prod_{p=1}^{k}[\ln_{p}(t)]^{-2}|u(t)|^{2}. \no
 \end{align} 
 Applying \eqref{3.23}--\eqref{3.25} yields
\begin{align} 
&\int_{0}^{\rho} dx \, \bigg\{ x^{\alpha} \big| f^{(m )}(x) \big|^{2} - A(m ,\alpha) x^{\alpha - 2m } |f(x)|^{2} \no \\
&\hspace*{1.3cm} - B(m ,\alpha) x^{\alpha - 2m }\sum_{k=1}^{\kk}\prod_{p=1}^{k}[\ln_{p}(\gamma/x)]^{-2}|f(x)|^{2} \no \\
&\hspace*{1.3cm} - \sum_{j=2}^{m } |c_{2j}(m ,\alpha)| A(j,0) x^{\alpha - 2m } [\ln(\gamma/x)]^{-2j} |f(x)|^{2}   
\no \\
&\hspace*{1.3cm} -  \sum_{j=2}^{m } |c_{2j}(m ,\alpha)|  B(j,0)  x^{\alpha - 2m } [\ln(\gamma/x)]^{-2j} \sum_{k=1}^{\kk - 1}  \prod_{p=1}^{k} [\ln_{p+1}(\gamma/x)]^{-2}|f(x)|^{2} \bigg\} \no \\
&\quad = \gamma^{\alpha-2m +1} \sum_{j=1}^{m }|c_{2j}(m ,\alpha)| 
\bigg\{ \int_{\ln(\gamma/ \rho)}^{\infty} dt \, \big|u^{(j)}(t) \big|^{2} - A(j,0)  
\int_{\ln(\gamma/ \rho)}^{\infty} dt \, t^{-2j} |u(t)|^{2}  \no \\
&\hspace*{4.6cm} - B(j,0) \sum_{k=1}^{\kk - 1} 
\int_{\ln(\gamma/ \rho)}^{\infty}dt \, t^{-2j} \prod_{p=1}^{k} [\ln_{p}(t)]^{-2}|u(t)|^{2} \bigg\},   \no \\
& \hspace*{7.5cm}  u \in C_0^{\infty}((\ln(\gamma/ \rho),\infty)),     \lb{3.26}
\end{align}
and the proof follows by induction over $\kk \in \bbN$, as before.
\end{proof}

\begin{proof}[Proof of Theorem \ref{t3.1}\,$(iv)$]
Once more, consider $\alpha \in \bbR \backslash \{j \,|\, 1 \leq j \leq 2m-1\}$. 
Suppose $\tau \geq \rho$, $f \in C_{0}^{\infty}((0,\rho))$, and use the scaling
\begin{equation}\lb{3.32}
x = \tau y, \quad    dx = \tau dy, \quad  y \in (0, \rho/\tau), \quad g(y) = f(\tau y),
\end{equation} 
so that $g \in C_{0}^{\infty}((0, \rho/\tau)) \subseteq  C_{0}^{\infty}((0, 1))$. Next, one  
applies the modified transformation
\begin{align}\lb{3.28}
\begin{split}
&y = e^{-t + 1}, \quad  dy = -e^{-t+1}dt, \quad  t \in (1, \infty), \\
&g(y) \equiv g(e^{-t+1}) = e^{[(2m - 1 - \alpha)/2](1-t)}v(t), \quad v \in C_{0}^{\infty}((1,\infty)),
\end{split}
\end{align}
where $v$ is given by
\begin{equation}
v(t) := w(1-t), \quad t \in (1, \infty),
\end{equation}
with $w \in C_{0}^{\infty}((-\infty,0))$. 
Therefore
\begin{align}\lb{3.29}
&(-1)^{m } \big(y^{\alpha} g^{(m )}(y) \big)^{(m )} \ol{g(y)} =e^{t-1}\sum_{j = 0}^{m } (-1)^{2m  - j}|c_{2j}(m ,\alpha)| v^{(2j)}(t) \ol{v(t)}.
\end{align}
Also,
 \begin{align}\lb{3.30}
 &y^{\alpha - 2m } |g(y)|^{2} = e^{t-1} |v(t)|^{2},   \\
 &y^{\alpha - 2m }\sum_{k=1}^{\kk}\prod_{p=1}^{k}L_{p}^{2}(y)|g(y)|^{2}
  = e^{t-1} \bigg\{ t^{-2}|v(t)|^{2} + t^{-2}\sum_{k=1}^{\kk - 1}\prod_{p=1}^{k}L_{p}^{2}(1/t)|v(t)|^{2}  \bigg\}, \no 
 \end{align}
 and for $j = 2, \dots, m $,
 \begin{align}\lb{3.31}
 &y^{\alpha - 2m }L_{1}^{2j}(y)|g(y)|^{2} = e^{t-1} t^{-2j} |v(t)|^{2},  \\
 &y^{\alpha - 2m } L_{1}^{2j}(y) \sum_{k=1}^{\kk-1}  \prod_{p=1}^{k}L_{p+1}^{2}(y)|g(y)|^{2} = e^{t-1} t^{-2j}\sum_{k=1}^{\kk-1} \prod_{p=1}^{k}L_{p}^{2}(1/t)|v(t)|^{2}. \no
 \end{align} 
Hence,
\begin{align}
&\int_{0}^{\rho} dx \, \bigg\{ x^{\alpha} \big| f^{(m )}(x) \big|^{2} - A(m ,\alpha) x^{\alpha - 2m } |f(x)|^{2} \no \\
&\hspace*{1.3cm} - B(m ,\alpha) x^{\alpha - 2m }\sum_{k=1}^{\kk}\prod_{p=1}^{k}L_{p}^{2}(x/\tau)|f(x)|^{2} \no \\
&\hspace*{1.3cm} - \sum_{j=2}^{m } |c_{2j}(m ,\alpha)| A(j,0) x^{\alpha - 2m } L_{1}^{2j}(x/\tau)  |f(x)|^{2}  \no  \\
&\hspace*{1.3cm} - \sum_{j=2}^{m } |c_{2j}(m ,\alpha)|  B(j,0)  x^{\alpha - 2m } L_{1}^{2j}(x/\tau)\sum_{k=1}^{\kk-1}  \prod_{p=1}^{k}L_{p+1}^{2}(x/\tau)|f(x)|^{2} \bigg\} \no \\
& \quad = \tau^{\alpha-2m +1}\sum_{j=1}^{m }|c_{2j}(m ,\alpha)| \bigg\{\int_{1}^{\infty} dt \, \big|v^{(j)}(t) \big|^{2} - A(j,0) \int_{1}^{\infty} dt \, t^{-2j} |v(t)|^{2} \no \\
&\hspace*{4.6cm} - B(j,0) \sum_{k=1}^{\kk - 1} \int_{1}^{\infty}dt \, t^{-2j} 
\prod_{p=1}^{k} L_{p}^{2}(1/t)|v(t)|^{2} \bigg\},    \no \\
& \hspace*{8.4cm}  v \in C_0^{\infty}((1,\infty)), 
\end{align}
and the proof follows again by induction over $\kk \in \bbN$. 
\end{proof}

Theorem \ref{t3.1}\,$(ii)$,\,$(iv)$ can be further improved by replacing the $\kk$-th sum with an infinite series. See, for example, \cite{BFT03, GM13, TZ07} for similar results and discussions of the convergence of the series $\sum_{k=1}^{\infty} \prod_{j=1}^{k} L_{j}^{2}(s)$ for $s \in (0,1)$.

\begin{corollary}\lb{c3.2}
Let $\ell, m  \in \bbN, \alpha \in \bbR,$ and $\rho, \tau \in (0,\infty)$. Then \eqref{3.34} and \eqref{3.36} 
extend to $N = \infty$. 
\end{corollary}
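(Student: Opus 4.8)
The plan is to obtain the $N=\infty$ statement as a limit of the finite-$N$ inequalities \eqref{3.34} and \eqref{3.36} already established in Theorem \ref{t3.1}\,$(ii)$,$(iv)$. The decisive structural observation is that the geometric hypotheses for these two parts, namely $\rho \geq \tau$ in $(ii)$ and $\tau \geq \rho$ in $(iv)$, are \emph{independent of $N$}, in sharp contrast to the hypotheses $\rho \geq e_N \gamma$ and $\gamma \geq e_N \rho$ of parts $(i)$,$(iii)$, which force $N$ to stay bounded once the remaining data are fixed. Consequently, for a single fixed $f \in C_{0}^{\infty}((\rho,\infty))$ (resp.\ $f \in C_{0}^{\infty}((0,\rho))$) the inequality \eqref{3.34} (resp.\ \eqref{3.36}) holds simultaneously for every $N \in \bbN$. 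Moreover, under these hypotheses the arguments $\tau/x$ and $x/\tau$ of the normalized logarithms lie in $(0,1)$ for all $x$ in the respective integration range, so that every factor $L_{p}(\,\cdot\,)$ occurring on the right-hand sides is well defined and strictly positive.

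First I would record that the left-hand side of \eqref{3.34} (resp.\ \eqref{3.36}) is a fixed finite number, independent of $N$. Next I would check that passing from $N$ to $N+1$ only \emph{adds} nonnegative quantities to the right-hand side: the $k$-sums appearing there run up to $N$ and $N-1$, and each newly created summand is an integral of a product of the nonnegative functions $x^{\alpha - 2\ell}$, the squared normalized logarithms $L_{p}^{2}(\,\cdot\,)$, and $\big|f^{(m-\ell)}\big|^{2}$. The prefactors are nonnegative as well: $A(j,0) > 0$, while $B(\ell,\alpha)$ and $B(j,0)$ are nonnegative because, by \eqref{1.5b}, each is a sum of products of squares, and the $|c_{2j}(\ell,\alpha)|$ are nonnegative by definition. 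Hence the two partial $k$-sums on the right-hand side are nondecreasing in $N$.

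Finally, since every term discarded in forming a partial sum is itself nonnegative, each partial sum is bounded above by the fixed finite left-hand side; a nondecreasing sequence bounded above converges, so both series converge and the inequality survives the passage $N \to \infty$, yielding \eqref{3.34} and \eqref{3.36} with $N$ replaced by $\infty$. This is consistent with the known convergence of $\sum_{k=1}^{\infty} \prod_{j=1}^{k} L_{j}^{2}(s)$ for $s \in (0,1)$ (cf.\ \cite{BFT03, GM13, TZ07}). The only point requiring care, and the closest thing to an obstacle, is precisely this convergence of the infinite logarithmic series; here it is furnished for free by the boundedness just described, which is why no independent summability estimate is needed. The exceptional values $\alpha \in \{2j-1\}_{1 \le j \le \ell}$ of Theorem \ref{t3.1}\,$(vi)$ require no separate treatment, as the limiting argument applies verbatim to whichever finite-$N$ form of the inequality holds.
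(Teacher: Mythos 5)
Your proposal is correct and follows essentially the same route as the paper: fix $f$, observe that \eqref{3.34} and \eqref{3.36} hold for every $N\in\bbN$ with a fixed finite left-hand side, note the right-hand sides are nondecreasing in $N$ because each added term is nonnegative, and conclude by monotone convergence of bounded increasing sequences. Your additional remarks on the $N$-independence of the hypotheses and the nonnegativity of the coefficients only make explicit what the paper leaves implicit.
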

\begin{proof}
It suffices to discuss the proof of \eqref{3.34}. Given $f \in C_{0}^{\infty}((\rho, \infty))$, 
Theorem \ref{t3.1}\,$(ii)$ implies that \eqref{3.34} holds for any $\kk \in \bbN$.
Thus, by taking $\kk \uparrow \infty$ and recalling that increasing sequences bounded above are convergent, \eqref{3.34} holds with $N = \infty$. 
\end{proof}

To put our results in perspective and to compare with existing results in the literature, we offer some  comments next.

\begin{remark}\lb{r3.3}
$(i)$ Theorem \ref{t3.1}\,$(i)$,$(ii)$ (resp., Theorem \ref{t3.1}\,$(iii)$,$(iv)$) extends to $N = \rho = 0$ (resp., 
$N=0$, $\rho = \infty$) upon disregarding all logarithmic terms (i.e., upon putting 
$B(\ell,\alpha) = c_{2j}(\ell,\alpha)= 0$, $2 \leq j \leq \ell$, $1 \leq \ell \leq m$), we omit the details. \\[1mm] 
$(ii)$ Originally, logarithmic refinements of Hardy's inequality started with oscillation theoretic considerations going back to Hartman \cite{Ha48} (see also \cite[p.~324--325]{Ha02}) and have been used in connection with Hardy's inequality in \cite{Ge84, GU98}, and more recently in \cite{GL18, GLMP19}. Since then there has been enormous activity in this context and we mention, for instance,  \cite{ACR01, AE05, AFT09, AGS06, AS02, AS09, AS06, Al76, AVV10, AH12, AGG06, Av17}, 
\cite[Chs.~3, 5]{BEL15}, \cite{BFT03, BFT18, Be89, CM12, Co10, DH98, DHA04, DHA04b, DHA05, DHA12, Du09, Du10, Du14, DLT19, DLT20, FT02, GGM03, GL18, GP18, GM08, GM11}, \cite[Chs.~2,6,7]{GM13}, \cite{Hi04, II15, MOW13, MOW15, MOW16, MOW17, Mi00, Mo12, Mu14, NN20, NY75}, \cite[Sect.~2.7]{Re69}, \cite{RS17, RY20, Sa19, So94, Ta15, TZ07, Ya99}. The vast majority of these references deals with analogous multi-dimensional settings (relevant to our setting in particular in the case of radially symmetric functions), most also in the $L^p$-context. \\[1mm] 
$(iii)$ For  $m  \geq 2$ these inequalities are new in the following sense: The weight parameter 
$\alpha \in \bbR$ is now unrestricted (as opposed to prior results, see item $(ii)$ of this remark) and at 
the same time the conditions on the logarithmic parameters $\gamma$ and $\tau$ are sharp. Moreover, the two integral terms containing $c_{2j}(m ,\alpha)$ are new in this generality (we note that a single term of the type $x^{-2m} [\ln(\gamma/x)]^{-4}$ appeared in \cite{AGS06} and \cite{CM12}; and \cite[Ch.~6]{MP81} discusses 
sums involving even powers of $[\ln(\gamma/x)]^{-1}$). We also note that the inequalities are proved for both 
iterated logarithms $\ln_{j}(\, \cdot \,)$ and $L_{j}(\, \cdot \,)$, $j \in \bbN$, and finally they are proved on both the exterior interval $(\rho, \infty)$ and interior interval $(0,\rho)$ for any $\rho \in (0,\infty)$.
\hfill $\diamond$
\end{remark}

We conclude this section by extending Theorem \ref{t3.1} from $C_0^{\infty}$-functions to functions in 
appropriately weighted Sobolev spaces.

To this end we introduce the norms on $C_0^{\infty}((a,b))$,  
\begin{align}
\begin{split}
\|f\|_{m,\alpha}^2 = \sum_{k=0}^m \int_a^b dx \, x^{\alpha} \big|f^{(k)}(x)\big|^2, \quad 
 |||f|||_{m,\alpha}^2 = \int_a^b dx \, x^{\alpha} \big|f^{(m)}(x)\big|^2,& \\   
0 \leq a < b \leq \infty, \; m \in \bbN, \; \alpha \in \bbR, \;  f \in C_0^{\infty}((a,b)).& 
\end{split} 
\end{align}
and define the weighted Sobolev spaces
\begin{equation}
H_0^m\big((a,b); x^{\alpha} dx\big) = \ol{C_0^{\infty}((a,b))}^{\| \, \cdot \,\|_{m,\alpha}}, \quad m \in \bbN, 
\; \alpha \in \bbR,     \lb{3.42} 
\end{equation}
and the corresponding homogeneous weighted Sobolev spaces 
\begin{equation}
\dot {H_0^m}\big((a,b); x^{\alpha} dx\big) = \ol{C_0^{\infty}((a,b))}^{||| \, \cdot \, |||_{m,\alpha}}, 
\quad m \in \bbN, \; \alpha \in \bbR.    \lb{3.43} 
\end{equation}
In the case $b < \infty$ we also note the following (higher-order) weighted Poincar\'e-type inequality: 

\begin{lemma} \lb{l3.4}
Let $\rho \in (0, \infty)$, $k,m \in \bbN$, $0 \leq k \leq m-1$, $\alpha \in \bbR$. Then there exists 
$C_{k,m} = C(k,m,\alpha,\rho) \in (0,\infty)$ such that 
\begin{align}
& C_{k,m} \big\|f^{(k)}\big\|_{L^2((0,\rho); x^{\alpha}dx)}^2 = C_{k,m} |||f|||_{k,\alpha}^2 
\leq |||f|||_{m,\alpha}^2 = \big\|f^{(m)}\big\|_{L^2((0,\rho); x^{\alpha}dx)}^2,  \no \\  
& \hspace*{8.5cm} f \in C_0^{\infty}((0,\rho)).     \lb{3.48} 
\end{align}
\end{lemma}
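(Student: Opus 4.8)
The plan is to prove the chain of inequalities in \eqref{3.48} by an iterated application of a single-step weighted Poincar\'e inequality. The key observation is that it suffices to establish, for each index $j$ with $1 \leq j \leq m$, a bound of the form
\begin{equation*}
\big\|f^{(j-1)}\big\|_{L^2((0,\rho); x^{\alpha}dx)}^2 \leq D_{j} \big\|f^{(j)}\big\|_{L^2((0,\rho); x^{\alpha}dx)}^2, \quad f \in C_0^{\infty}((0,\rho)),
\end{equation*}
for some constant $D_j = D(j,\alpha,\rho) \in (0,\infty)$, since chaining these from level $k+1$ up to level $m$ (applied to $f^{(k)}, f^{(k+1)}, \dots$) yields \eqref{3.48} with $C_{k,m} = \prod_{j=k+1}^{m} D_j^{-1}$. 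Thus the heart of the matter reduces to the first-order inequality, and the full statement follows by a finite induction on the number of derivatives.

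First I would establish the single-step inequality. Fix $g \in C_0^{\infty}((0,\rho))$ (playing the role of some $f^{(j-1)}$, so that $g' = f^{(j)}$) and write, using that $g$ vanishes near $0$,
\begin{equation*}
g(x) = \int_0^x dy \, g'(y), \quad x \in (0,\rho).
\end{equation*}
Then I would bound $|g(x)|^2$ by Cauchy--Schwarz. Because the natural weight is $x^{\alpha}$, the clean way to proceed is to insert the weight into the Cauchy--Schwarz step: writing $g'(y) = y^{-\alpha/2} \big(y^{\alpha/2} g'(y)\big)$ gives
\begin{equation*}
|g(x)|^2 \leq \bigg(\int_0^x dy \, y^{-\alpha}\bigg) \bigg(\int_0^x dy \, y^{\alpha} |g'(y)|^2\bigg) \leq \bigg(\int_0^x dy \, y^{-\alpha}\bigg) \big\|g'\big\|_{L^2((0,\rho); x^{\alpha}dx)}^2.
\end{equation*}
The factor $\int_0^x y^{-\alpha}\, dy$ is finite precisely when $\alpha < 1$, equals $x^{1-\alpha}/(1-\alpha)$ there, and the remaining step is to multiply by $x^{\alpha}$ and integrate over $(0,\rho)$, which produces a convergent integral $\int_0^{\rho} x^{\alpha}\cdot x^{1-\alpha}\,dx = \rho^2/2$ and hence a finite constant $D_j$ depending only on $\alpha$ and $\rho$.

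The main obstacle is the range of $\alpha$: the naive computation above requires $\alpha < 1$ for $\int_0^x y^{-\alpha}\,dy$ to converge at the origin, whereas the lemma asserts the result for all $\alpha \in \bbR$. The honest fix is to not integrate $g'$ from $0$ but instead to run the estimate at the level where integrability is guaranteed. One clean route is to appeal directly to Theorem \ref{t3.1}\,$(iii)$ (or its $\ell=1$ reduction, the power-weighted Hardy inequality, obtained by disregarding logarithmic terms as in Remark \ref{r3.3}\,$(i)$): applied to $f^{(m-1)}$ with $\ell = 1$ it gives $A(1,\alpha) \int_0^{\rho} x^{\alpha-2} |f^{(m-1)}(x)|^2\,dx \leq \int_0^{\rho} x^{\alpha} |f^{(m)}(x)|^2\,dx$, and then bounding $\int_0^{\rho} x^{\alpha} |f^{(m-1)}|^2 \leq \rho^2 \int_0^{\rho} x^{\alpha-2} |f^{(m-1)}|^2$ (valid since $x \leq \rho$ on the interval, here crucially using $b = \rho < \infty$) closes the single step for every $\alpha \in \bbR \setminus \{1\}$, with the exceptional value $\alpha = 1$ handled by the continuity limit already used throughout Section \ref{s3}. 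I would therefore phrase the argument so that the finiteness of $b = \rho$ supplies the bound $x^{\alpha} \leq \rho^{2} x^{\alpha - 2}$, which is exactly what lets us trade the stronger singular weight $x^{\alpha-2}$ for the weight $x^{\alpha}$ and convert the established Hardy-type lower bound into the desired Poincar\'e upper bound, iterating $m - k$ times to reach \eqref{3.48}.
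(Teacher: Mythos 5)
Your reduction to Theorem \ref{t3.1}\,$(iii)$ combined with the trivial bound $x^{\alpha} \leq \rho^{2}\,x^{\alpha-2}$ on $(0,\rho)$ is essentially the paper's argument (the paper applies \eqref{3.35} once with $\ell = m-k$ rather than iterating $\ell=1$ steps, but that difference is immaterial). The genuine gap is your treatment of the exceptional value $\alpha = 1$, where $A(1,\alpha) = \big((1-\alpha)/2\big)^{2}$ vanishes. You propose to handle it ``by the continuity limit already used throughout Section \ref{s3},'' but that limiting argument only establishes the validity of inequality \eqref{3.35} at exceptional $\alpha$ --- with the coefficient $A$ equal to zero, so the surviving statement $0 \leq \int_0^{\rho} dx\, x\, |f^{(m)}(x)|^{2}$ carries no information. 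Your single-step constant is $D_j = \rho^{2}/A(1,\alpha)$, which blows up as $\alpha \to 1$; no limit in $\alpha$ can convert this into a finite constant at $\alpha = 1$. Continuity of the \emph{inequality} in $\alpha$ is not the same as finiteness of the \emph{Poincar\'e constant} there, and the lemma claims the result for all $\alpha \in \bbR$.

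The repair --- which is exactly what the paper does --- is to fall back on the second term of \eqref{3.35} when the first degenerates: if $A(\ell,\alpha) = 0$ then $\alpha = 2j_0 - 1$ for exactly one $j_0 \in \{1,\dots,\ell\}$, so exactly one summand in \eqref{1.5b} survives and $B(\ell,\alpha) = 4^{-\ell}\prod_{j \neq j_0}(2j-1-\alpha)^{2} > 0$, whence $\int_0^{\rho} dx\, x^{\alpha}\big|f^{(m)}(x)\big|^{2} \geq B(\ell,\alpha)\int_0^{\rho} dx\, x^{\alpha-2\ell}[\ln(\gamma/x)]^{-2}\big|f^{(m-\ell)}(x)\big|^{2}$. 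Since $x^{-2\ell}[\ln(\gamma/x)]^{-2}$ tends to $+\infty$ as $x \downarrow 0$ and is continuous and positive up to $x=\rho$, it is bounded below by a positive constant on $(0,\rho)$, which closes the estimate (the paper phrases this by splitting the integral at a point $\eta$ below which the weight is monotone). Without this device, or an equivalent one, your proof covers only $\alpha \in \bbR \setminus \{1\}$.
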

\begin{proof}
If $A(m-k,\alpha) \neq 0$, one can use the simplest inequality in Theorem \ref{t3.1}\,$(iii)$ to conclude for 
$f \in C_0^{\infty}((0,\rho))$, 
\begin{align}
\begin{split} 
\int_0^{\rho} dx \, x^{\alpha} \big|f^{(m)}(x)\big|^2 &\geq A(m-k,\alpha) \int_0^{\rho} dx \, x^{\alpha - 2(m-k)} 
\big|f^{(k)}(x)\big|^2   \\
& \geq A(m-k,\alpha) \rho^{-2(m-k)} \int_0^{\rho} dx \, x^{\alpha} \big|f^{(k)}(x)\big|^2. 
\end{split}
\end{align}  
If $A(m-k,\alpha) = 0$, one uses the next simplest inequality  in Theorem \ref{t3.1}\,$(iii)$ to infer 
\begin{align}
& \int_0^{\rho} dx \, x^{\alpha} \big|f^{(m)}(x)\big|^2 \geq B(m-k,\alpha) \int_0^{\rho} dx \, x^{\alpha - 2(m-k)} 
[\ln(\gamma/x)]^{-2} \big|f^{(k)}(x)\big|^2     \no \\
& \quad \geq B(m-k,\alpha) \bigg(\int_0^{\eta} + \int_{\eta}^{\rho}\bigg) dx \, x^{\alpha - 2(m-k)} 
[\ln(\gamma/x)]^{-2} \big|f^{(k)}(x)\big|^2     \no \\
& \quad \geq C_{k,m}  \int_0^{\rho} dx \, x^{\alpha} \big|f^{(k)}(x)\big|^2,
\end{align}  
where $\eta \in (0,\rho)$ is chosen such that $x^{- 2(m-k)} [\ln(\gamma/x)]^{-2}$ is strictly monotonically decreasing 
on the interval $(0,\eta)$. 
\end{proof}

Thus, for $\rho \in (0, \infty)$, \eqref{3.48} implies equivalence of the norms $\|\, \cdot \,\|_{m,\alpha}$ and  
$|||\, \cdot \,|||_{m,\alpha}$ on $C_0^{\infty}((0,\rho))$ since repeated application of \eqref{3.48} yields, 
\begin{align}
\begin{split}
|||f|||_{m,\alpha}^2 \leq \|f\|_{m,\alpha}^2 = |||f|||_{m,\alpha}^2 + \sum_{k=0}^{m-1} |||f|||_{k,\alpha}^2 
\leq C |||f|||_{m,\alpha}^2,& \\ 
f \in C_0^{\infty}((0,\rho)), \; m \in \bbN,&
\end{split} 
\end{align} 
with $C = C(m,\alpha,\rho) \in (0,\infty)$. In particular, 
\begin{equation}
H_0^m\big((0,\rho); x^{\alpha} dx\big) = \dot {H_0^{m}}\big((0,\rho); x^{\alpha} dx\big), \quad \rho \in (0,\infty), 
\; m \in \bbN, \; \alpha \in \bbR. 
\end{equation}
Of course, since $x^{\alpha}$ is bounded from above and from below near $x = \rho$, 
\begin{align}
\begin{split}
&f \in \dot {H_0^{m}}\big((0,\rho); x^{\alpha} dx\big) = H_0^m\big((0,\rho); x^{\alpha} dx\big), \quad \rho \in (0,\infty),  \\
& \quad \text{implies } \, f(\rho) = f'(\rho) = \cdots = f^{(m-1)}(\rho) = 0.    \lb{3.53} 
\end{split} 
\end{align}

Given these preparations, we can now extend Theorem \ref{t3.1} as follows:

\begin{theorem} \lb{t3.5}
Under the hypotheses in Theorem \ref{t3.1}, items $(i)$ and $(ii)$ extend from $f \in C_0^{\infty} ((\rho, \infty))$ 
to $f \in \dot {H_0^{m}}\big((\rho,\infty); x^{\alpha} dx\big)$ and items $(iii)$ and $(iv)$ extend from 
$f \in C_0^{\infty} ((0, \rho))$  to $f \in \dot {H_0^{m}}\big((0,\rho); x^{\alpha} dx\big) 
= H_0^m\big((0,\rho); x^{\alpha} dx\big)$. 
\end{theorem}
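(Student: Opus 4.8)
The plan is to extend the inequalities of Theorem \ref{t3.1} from $C_0^{\infty}$-functions to the homogeneous weighted Sobolev space closure by a standard density-plus-continuity argument. The core observation is that every inequality in Theorem \ref{t3.1} has the schematic form $\||f|\|_{m,\alpha}^2 \geq Q(f)$, where $Q(f)$ is a (nonnegative) sum of weighted $L^2$-type integrals of $f^{(m-\ell)}$, and where each such integral dominates, via the very same inequality applied with a smaller value of $\ell$, a piece of the right-hand side. First I would fix $f \in \dot{H_0^m}\big((0,\rho); x^{\alpha}dx\big)$ (the interior case; the exterior case is identical) and pick a sequence $f_n \in C_0^{\infty}((0,\rho))$ with $|||f_n - f|||_{m,\alpha} \to 0$, which exists by the definition \eqref{3.43} of the homogeneous space as a completion.

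The key steps, in order, are as follows. First, convergence of the left-hand sides is immediate: $|||f_n|||_{m,\alpha}^2 \to |||f|||_{m,\alpha}^2$. Second, I must show each term on the right-hand side of \eqref{3.35} converges to the corresponding term for $f$. For this I would use that $(f_n)$ is Cauchy in $|||\cdot|||_{m,\alpha}$, hence (by the higher-order weighted Poincar\'e inequality, Lemma \ref{l3.4}, applied to the differences $f_n - f_{n'}$) Cauchy in each intermediate seminorm $|||\cdot|||_{k,\alpha}$ for $0 \leq k \leq m-1$, and therefore Cauchy in the full norm $\|\cdot\|_{m,\alpha}$. By the norm-equivalence on $C_0^{\infty}((0,\rho))$ established just after Lemma \ref{l3.4}, the limit function $f$ lies in $H_0^m = \dot{H_0^m}$ and $f_n^{(k)} \to f^{(k)}$ in $L^2((0,\rho); x^{\alpha}dx)$ for every $0 \leq k \leq m$. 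Third, each weighted integral on the right-hand side of \eqref{3.35} is of the form $\int_0^{\rho} dx\, W(x)\,|f^{(m-\ell)}(x)|^2$ with a bounded weight $W(x) = x^{\alpha-2\ell}\,x^{2(m-\ell)}\cdot(\text{logarithmic factors})$ relative to the measure $x^{\alpha - 2(m-\ell)}\,\ldots$; more directly, each such term is itself controlled by the simplest inequality in Theorem \ref{t3.1}\,$(iii)$ applied to the differences, showing the quadratic forms are continuous with respect to $|||\cdot|||_{m,\alpha}$-convergence.

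The cleanest route, which I would adopt to avoid estimating each of the four right-hand side families separately, is the following. Each individual integral functional $Q_r(g) = \int_0^{\rho} dx\, W_r(x)\,|g^{(m-\ell)}(x)|^2$ appearing in \eqref{3.35} satisfies $0 \leq Q_r(g) \leq C_r\,|||g|||_{m,\alpha}^2$ for all $g \in C_0^{\infty}((0,\rho))$, with $C_r$ independent of $g$: indeed, bounding the logarithmic factors by their (finite) suprema on $(0,\rho)$ and invoking Theorem \ref{t3.1}\,$(iii)$ with the parameter $\ell$ in place of $m$ gives $Q_r(g) \leq C_r' \int_0^{\rho} dx\, x^{\alpha-2\ell}|g^{(m-\ell)}(x)|^2 \leq C_r'' |||g|||_{m,\alpha}^2$, the last step being Lemma \ref{l3.4} applied to $g^{(m-\ell)}$ together with the boundedness of the relevant weights. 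Consequently each $\sqrt{Q_r}$ is a seminorm that is $|||\cdot|||_{m,\alpha}$-continuous on $C_0^{\infty}((0,\rho))$, so $Q_r(f_n) \to Q_r(f)$, and summing the finitely many terms (for fixed $N$) passes \eqref{3.35} to the limit. Strictness, part $(v)$, transfers because equality in the limit would force $Q_r(f) = |||f|||_{m,\alpha}^2 - (\text{other nonnegative terms})$ to collapse, and the strict positivity already established for the leading inequality \eqref{3.6b} shows $f \equiv 0$; alternatively, $f \not\equiv 0$ means $|||f|||_{m,\alpha} > 0$, and the leading $A(\ell,\alpha)$-term plus Lemma \ref{l3.4} already forces strict inequality when $A(\ell,\alpha) \neq 0$, the exceptional case being handled by the next nonzero term as in the proof of Theorem \ref{t3.1}.

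The main obstacle I anticipate is purely technical rather than conceptual: verifying uniform boundedness of the logarithmic weights $\prod_{p=1}^{k}[\ln_p(\gamma/x)]^{-2}$ and $[\ln(\gamma/x)]^{-2j}$ near $x = \rho$, where $\gamma/x$ could be close to $1$ and the iterated logarithms $\ln_p$ could be small or ill-defined. This is precisely where the hypothesis $\gamma \geq e_N \rho$ (equivalently $\gamma/x > e_N > e_{N-1} > \cdots$) is indispensable: it guarantees that $\ln_p(\gamma/x)$ stays bounded below by a positive constant uniformly for $x \in (0,\rho)$ and all $1 \leq p \leq N$, so that every logarithmic factor is bounded above on $(0,\rho)$ and the constants $C_r$ above are finite. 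Once this uniform boundedness is recorded, the density argument runs without further difficulty; the infinite-series extension (Corollary \ref{c3.2}) passes to the Sobolev setting by the same monotone-convergence reasoning used there.
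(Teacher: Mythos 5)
There is a genuine gap, and it sits exactly at the point you flag as ``the main obstacle'' and then dismiss. Your claim that the hypothesis $\gamma \geq e_{N}\rho$ forces every iterated logarithm $\ln_p(\gamma/x)$ to stay bounded below by a positive constant on $(0,\rho)$ is false in the borderline case $\gamma = e_{N}\rho$, which the theorem explicitly permits and which the paper stresses is the \emph{sharp} condition on $\gamma$: as $x \uparrow \rho$ one has $\gamma/x \downarrow e_{N}$, hence $\ln_{N}(\gamma/x) \downarrow \ln_{N}(e_{N}) = e_0 = 0$ by \eqref{1.3}, so the factor $[\ln_{N}(\gamma/x)]^{-2}$ blows up near $x=\rho$ (the same happens as $x \downarrow \rho$ on the exterior interval when $\rho = e_{N}\gamma$). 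Consequently the constants $C_r$ obtained by ``bounding the logarithmic factors by their suprema'' are not finite, and the asserted $|||\,\cdot\,|||_{m,\alpha}$-continuity of the quadratic forms $Q_r$ is not established. (A bound $Q_r(g)\leq C_r\,|||g|||_{m,\alpha}^2$ is in fact true by a different route --- each term on the right-hand side of \eqref{3.35} is individually dominated by the left-hand side, all terms being nonnegative --- but even granting it, continuity of the seminorm $\sqrt{Q_r}$ only shows that $Q_r(f_n)$ converges; identifying its limit with $\int_0^{\rho}dx\, W_r(x)\,|f^{(m-\ell)}(x)|^2$ for the limit function $f$ still requires an a.e.-convergence step that you do not supply.) The paper sidesteps all of this: it extracts a subsequence with $f_{n_p}^{(k)} \to f^{(k)}$ pointwise a.e., collects the entire right-hand side into a single nonnegative weight $w_{\ell,\alpha,N}$, and applies Fatou's lemma, which requires no boundedness of the weights and only the one-sided $\liminf$ estimate --- which is all the inequality needs.

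A second, independent problem is your parenthetical ``the exterior case is identical.'' Your mechanism for controlling the lower-order derivatives rests on the weighted Poincar\'e inequality of Lemma \ref{l3.4} and the ensuing equivalence of $\|\,\cdot\,\|_{m,\alpha}$ and $|||\,\cdot\,|||_{m,\alpha}$, both of which are proved only on the finite interval $(0,\rho)$ and fail on $(\rho,\infty)$, where $\dot{H_0^{m}}\big((\rho,\infty);x^{\alpha}dx\big)$ need not coincide with $H_0^m\big((\rho,\infty);x^{\alpha}dx\big)$. The paper treats $(\rho,\infty)$ separately, introducing the Hilbert space $H_{m,\alpha}([\rho,\infty))$ and citing a separate argument for a.e.\ convergence of a defining sequence, after which the same Fatou step applies. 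Your overall density strategy is reasonable, but as written it fails precisely in the sharp parameter regime and on the exterior interval; replacing the continuity claim by the Fatou argument repairs both defects.
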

\begin{proof}
Since the proofs of items $(i)$--$(iv)$ follow the same route based on combining Theorem \ref{t3.1} with Fatou's lemma, it suffices to focus on cases $(i)$ and $(iii)$. \\[1mm] 
$(i)$. We start with the finite interval case $(iii)$. Since $C_0^{\infty} ((0, \rho))$ is dense in  $H_0^m\big((0,\rho); x^{\alpha} dx\big)$ (in the norm $\|\, \cdot \,\|_{m,\alpha}$), given 
$f \in H_0^m\big((0,\rho); x^{\alpha} dx\big)$, there exists a sequence 
$\{f_n\}_{n \in \bbN} \subset C_0^{\infty} ((0, \rho))$ such that 
$\lim_{n \to \infty} \big\|f_n - f\big\|_{m,\alpha}^2 = 0$, explicitly,
\begin{equation}
\lim_{n \to \infty} \int_0^{\rho} dx \, x^{\alpha} \big|f_n^{(k)}(x) - f^{(k)}(x)\big|^2 = 0, \quad 0 \leq k \leq m, 
\; \alpha \in \bbR.   \lb{3.54} 
\end{equation}
Hence, for each $0 \leq k \leq m$, one can find a subsequence $\{f_{n_p,k}\}_{p \in \bbN}$ of $\{f_n\}_{n \in \bbN}$ such that 
\begin{equation}
x^{\alpha/2} f_{n_p,k}^{(k)} \underset{p \to \infty}{\longrightarrow} x^{\alpha/2} f^{(k)} \, 
\text{ pointwise a.e.~on $(0,\rho)$,}
\end{equation} 
equivalently,
\begin{equation}
f_{n_p,k}^{(k)} \underset{p \to \infty}{\longrightarrow} f^{(k)} \, 
\text{ pointwise a.e.~on $(0,\rho)$.}
\end{equation} 
Hence, abbreviating
\begin{align}
w_{\ell, \alpha, N}(x) &= A(\ell,\alpha) + B(\ell,\alpha) \sum_{k=1}^{\kk} \prod_{p=1}^{k} [\ln_{p}(\gamma/x)]^{-2} 
\no \\
& \quad + \sum_{j=2}^{\ell} |c_{2j}(\ell,\alpha)| A(j,0) [\ln(\gamma/x)]^{-2j}   \\
& \quad + \sum_{j=2}^{\ell} |c_{2j}(\ell,\alpha)|  B(j,0) \sum_{k=1}^{\kk - 1}  [\ln(\gamma/x)]^{-2j} 
\prod_{p=1}^{k} [\ln_{p+1}(\gamma/x)]^{-2}, \quad x \in (0,\rho),     \no 
\end{align}
(a well-known consequence of) Fatou's lemma (cf., e.g., \cite[Corollary~2.19]{Fo99}) and inequality \eqref{3.35} 
imply 
\begin{align}
& \int_0^{\rho} dx \, x^{\alpha - 2 \ell} w_{\ell, \alpha, N}(x) \big|f^{(m-\ell)}(x)\big|^2  \no \\
& \quad \leq \liminf_{p \to \infty} \int_0^{\rho} dx \, x^{\alpha - 2 \ell} w_{\ell, \alpha, N}(x) 
\big|f_{n_p,m-\ell}^{(m-\ell)}(x)\big|^2 \quad \text{(by Fatou's lemma)}  \no \\
& \quad = \lim_{p \to \infty} \int_0^{\rho} dx \, x^{\alpha - 2 \ell} w_{\ell, \alpha, N}(x) 
\big|f_{n_p,m-\ell}^{(m-\ell)}(x)\big|^2  \no \\
& \quad \leq \lim_{p \to \infty} \int_0^{\rho} dx \, x^{\alpha} \big|f_{n_p,m-\ell}^{(m)}(x)\big|^2 
\quad \text{(by \eqref{3.35})}   \no \\
& \quad = \int_0^{\rho} dx \, x^{\alpha} \big|f^{(m)}(x)\big|^2 \quad \text{(by \eqref{3.54} with $k=m$).}   \lb{3.57} 
\end{align} 
$(ii)$. To treat the interval $(\rho,\infty)$ one can argue as follows. Using arguments analogous to those in the proof of \cite[Proposition~3.1]{GLMW18}, one shows that the space 
\begin{align}
H_{m,\alpha}([\rho,\infty)) &= \big\{f: [\rho,\infty) \to \bbC \, \big| \, \text{for all $R > \rho$,} \, 
f^{(k)} \in AC([\rho,R]), \, 0 \leq k \leq m-1;    \no \\
& \hspace*{7mm}
f^{(k)}(\rho) = 0, \, 0 \leq k \leq m-1; \, f^{(m)} \in L^2\big((\rho,\infty); x^{\alpha}dx\big)\big\}, 
\end{align}
is a Hilbert space space with respect to the norm $|||\, \cdot \,|||_{m,\alpha}$ associated with the inner product 
\begin{equation}
\langle f,g\rangle_{m,\alpha} = \int_{\rho}^{\infty} x^{\alpha} dx \, \ol{f^{(m)}(x)} g^{(m)}(x), 
\quad f, g \in H_{m,\alpha}([\rho,\infty)).    \lb{3.58}
\end{equation}
The fact $C_0^{\infty}((\rho,\infty)) \subset H_{m,\alpha}([\rho,\infty))$ naturally leads to the introduction of the space $\dot {H_0^{m}} \big((\rho,\infty); x^{\alpha} dx\big)$ as the closure of $C_0^{\infty}((\rho,\infty))$ in 
the norm $|||\, \cdot \,|||_{m,\alpha}$ in accordance with \eqref{3.43}. Then a routine argument (see 
\cite[Appendix~B]{GLMP20b} for details) shows that if 
$f \in \dot {H_0^{m}} \big((\rho,\infty); x^{\alpha} dx\big)$ then there exists a sequence $\{f_n\}_{n \in \bbN} 
\subset C_0^{\infty}((\rho,\infty))$ such that for $0 \leq k \leq m$,
\begin{equation}
\lim_{n \to \infty} f_n^{(k)} (x) = f^{(k)} (x) \, \text{ for a.e.~$x \geq \rho$.}
\end{equation} 
At this point one can follow the Fatou-type argument in \eqref{3.57}. 
\end{proof}

\section{The Vector-Valued Case} \lb{s4}

In our final section, we establish that all previous inequalities extend line by line to the vector-valued case in 
which $f$ is $\cH$-valued, with $\cH$ a separable, complex Hilbert space. The relevance of such a generalization is briefly mentioned at the end of this section. 
 
We start by stating a power-weighted extension of \eqref{1.1} for vector-valued functions, which is derived from the more general Hardy result \cite[Example 1]{CGLMMP19} by simple iteration (see also \cite[Theorem 8.1]{GLMW18} for the special case $\alpha = 0$, $a = 0$, $b = \infty$). Inequality \eqref{5.1} will replace \eqref{1.1} in the base step of each induction proof.

\begin{lemma} \lb{l5.1}
Let $m  \in \bbN$, $\alpha \in \bbR \backslash \{2j-1\}_{1 \leq j \leq m}$, $0 \leq a < b \leq \infty$.
Then for all $f \in C_{0}^{\infty}((a,b);\cH)$,
\begin{align}\lb{5.1}
\int_{a}^{b} dx \, x^{\alpha} \big\| f^{(m )}(x) \big \|_{\cH}^{2} \geq A(m , \alpha) \int_{a}^{b} dx  \, x^{\alpha - 2m } \| f(x) \|_{\cH}^{2}.
\end{align}
The constant $A(m ,\alpha)$ is sharp and equality holds if and only if $f = 0$ on $(a,b)$.
\end{lemma}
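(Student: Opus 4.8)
The plan is to reduce the higher-order inequality \eqref{5.1} to the first-order ($m=1$) case and then iterate. The first-order power-weighted Hardy inequality states that for $f \in C_0^\infty((a,b);\cH)$ and $\beta \in \bbR \backslash \{1\}$,
\begin{equation*}
\int_a^b dx \, x^{\beta} \big\|f'(x)\big\|_{\cH}^2 \geq \bigg(\frac{\beta-1}{2}\bigg)^2 \int_a^b dx \, x^{\beta-2} \|f(x)\|_{\cH}^2,
\end{equation*}
which is exactly the $m=1$ instance of \eqref{5.1} with $A(1,\beta) = ((1-\beta)/2)^2 = ((\beta-1)/2)^2$, and is the cited vector-valued Hardy result \cite[Example 1]{CGLMMP19}. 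The idea is that $A(m,\alpha)$ telescopes as a product of such first-order constants. Indeed, applying the first-order inequality to $f^{(m-1)}$ with weight exponent $\beta = \alpha$ gives a factor $A(1,\alpha) = ((2\cdot 1 - 1 - \alpha)/2)^2$; the right-hand side then carries the weight $x^{\alpha-2}$ on $f^{(m-1)}$.

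The key step is to iterate carefully, tracking how the weight exponent shifts at each stage. First I would apply the $m=1$ inequality to $g = f^{(m-1)} \in C_0^\infty((a,b);\cH)$ with weight $x^{\alpha}$ to obtain
\begin{equation*}
\int_a^b dx \, x^{\alpha} \big\|f^{(m)}(x)\big\|_{\cH}^2 \geq \bigg(\frac{2m-1-\alpha}{2}\bigg)^2 \int_a^b dx \, x^{\alpha-2} \big\|f^{(m-1)}(x)\big\|_{\cH}^2,
\end{equation*}
where I have used that, at the top level, the relevant first-order constant is $A(1,\alpha)$ with the bookkeeping arranged so the exponent in the product matches $2j-1-\alpha$ for $j=m$. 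Then I would apply the first-order inequality again to $f^{(m-2)}$, now with shifted weight exponent $\alpha-2$, producing the factor $((2(m-1)-1-\alpha)/2)^2 = ((2m-3-\alpha)/2)^2$ and lowering the weight to $x^{\alpha-4}$. Repeating this $m$ times in total, the accumulated constant is
\begin{equation*}
\prod_{j=1}^{m} \bigg(\frac{2j-1-\alpha}{2}\bigg)^2 = A(m,\alpha),
\end{equation*}
with final weight $x^{\alpha-2m}$ on $f$, which is precisely \eqref{5.1}. The hypothesis $\alpha \in \bbR \backslash \{2j-1\}_{1 \leq j \leq m}$ guarantees that none of the first-order constants invoked along the way vanishes, so every application of the first-order inequality is nondegenerate and the iteration is valid.

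The subtler part is sharpness of $A(m,\alpha)$ and the strictness (equality iff $f\equiv 0$). For sharpness I would exhibit a sequence of near-extremizers: since the scalar extremizers of the first-order Hardy inequality are powers $x^{(1-\beta)/2}$ (which fail to be compactly supported but can be approximated by cutoffs), the natural candidate in the $m$-th order case is a power $x^{s}$ with $s$ tuned to the product structure, truncated smoothly near the endpoints of $(a,b)$; letting the truncation regions shrink drives the ratio of the two sides to $A(m,\alpha)$, showing the constant cannot be improved. Strictness follows because equality in the first-order inequality at any stage forces the corresponding function to be a scalar multiple of the (non-$C_0^\infty$) extremizer, which is incompatible with compact support unless the function vanishes identically; alternatively, strictness is inherited from the strictness of the scalar-valued results already in the literature, applied componentwise via an orthonormal basis of $\cH$. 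The main obstacle I anticipate is organizing the sharpness argument cleanly rather than the inequality itself, since the iteration is essentially automatic once the first-order vector-valued Hardy inequality is granted; I would therefore lean on the cited sharpness statements for the scalar case and Appendix \ref{sA}, reducing the vector-valued sharpness to the scalar one.
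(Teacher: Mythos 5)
Your proposal follows essentially the same route as the paper: Lemma \ref{l5.1} is obtained there precisely by iterating the first-order vector-valued weighted Hardy inequality of \cite[Example 1]{CGLMMP19}, and sharpness is handled by reducing to the scalar mollified-power-function argument of Appendix \ref{sA}, exactly as you sketch. One bookkeeping correction: the factor produced at the $k$-th application of the first-order inequality (acting on $f^{(m-k)}$ with weight exponent $\alpha-2(k-1)$) is $\big((2k-1-\alpha)/2\big)^2$, not $\big((2(m-k+1)-1-\alpha)/2\big)^2$ as in your displayed intermediate inequalities --- your first display would assert the constant $\big((2m-1-\alpha)/2\big)^2$ for a single Hardy step with weight $x^{\alpha}$, which is false for $m\geq 2$ (the correct value is $\big((1-\alpha)/2\big)^2$); since the complete set of factors over $j=1,\dots,m$ is the same either way, the accumulated product $A(m,\alpha)$ and the final conclusion are unaffected.
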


In addition, the combined Hartman--M\"ueller-Pfeiffer transformation extends to the $\cH$-valued context. 
Indeed, given $m , \kk \in \bbN$, $\alpha \in \bbR$, $\alpha \ne 1, \dots, 2m -1$, and 
$f \in C_{0}^{\infty}((e_{\kk}, \infty); \cH)$, one sets
\begin{align}\lb{5.2}
\begin{split}
&x = e^{t}, \quad  dx = e^{t}dt, \quad  t \in (e_{\kk-1}, \infty), \\
&f(x) \equiv f(e^t) = e^{(m - \frac{1 + \alpha}{2})t}w(t), \quad w \in C_{0}^{\infty}((e_{\kk-1}, \infty); \cH),
\end{split}
\end{align}
so that 
\begin{align}\lb{5.3}
\big(x^{\alpha} f^{(m )}(x) \big)^{(m )} = e^{-(m + \frac{1 - \alpha}{2})t} \sum_{\ell = 0}^{2m } c_{\ell}(m ,\alpha) w^{(\ell)}(t).
\end{align}
Combining \eqref{5.2} and \eqref{5.3} yields
\begin{align}\lb{5.4b}
(-1)^{m } \! \Big( \! \big(x^{\alpha} f^{(m )}(x) \big)^{(m )}\!\!\!, f(x) \! \Big)_{\!\cH} \!\!\!\! = e^{-t} \! \sum_{j = 0}^{m } (-1)^{2m  - j}|c_{2j}(m ,\alpha)| \! \big( w^{(2j)}(t), w(t) \big)_{\!\cH}.
\end{align}
 Furthermore,
 \begin{align} 
&x^{\alpha - 2m } \|f(x)\|_{\cH}^{2} = e^{-t} \|w(t)\|_{\cH}^{2},    \no \\
 &x^{\alpha - 2m }\! \sum_{k=1}^{\kk}\prod_{p=1}^{k}[\ln_{p}(x)]^{-2}\|f(x)\|_{\cH}^{2}    \lb{5.4c} \\
 & \quad = e^{-t} \! \bigg\{ \! t^{-2}\|w(t)\|_{\cH}^{2}  \!+  t^{-2} \! \! \sum_{k=1}^{\kk - 1} 
 \prod_{p=1}^{k}[\ln_{p}(t)]^{-2}\|w(t)\|_{\cH}^{2} \! \bigg\}, \no 
 \end{align}
 and for $j = 2, \dots, m $,
 \begin{align} 
 &x^{\alpha - 2m }[\ln(x)]^{-2j}\|f(x)\|_{\cH}^{2} = e^{-t} t^{-2j} \|w(t)\|_{\cH}^{2},    \no \\
 &x^{\alpha - 2m } [\ln(x)]^{-2j} \sum_{k=1}^{\kk-1}  \prod_{p=1}^{k}[\ln_{p+1}(x)]^{-2}\|f(x)\|_{\cH}^{2}   
 \lb{5.4d} \\
 & \quad = e^{-t} t^{-2j}\sum_{k=1}^{\kk-1} \prod_{p=1}^{k}[\ln_{p}(t)]^{-2}\|w(t)\|_{\cH}^{2}. \no
 \end{align} 
 The modified variable transformations \eqref{3.7}, \eqref{3.21}, \eqref{3.28}, generalize analogously.

Finally, we note that \eqref{2.14} extends to the vector-valued situation in the form
\begin{equation}\lb{5.5}
\int_{a}^{b} dx \, x^{\alpha}  \big\| f^{(m )}(x) \big \|_{\cH}^{2} = (-1)^{m } \int_{a}^{b} dx \, \Big( \big(x^{\alpha} f^{(m )}(x) \big)^{(m )}, f(x) \Big)_{\cH},
\end{equation}
for $f \in C_{0}^{\infty}((a,b); \cH)$, where $0 \leq a < b \leq \infty, m  \in \bbN, \alpha \in \bbR$.

Given these preliminaries, the vector-valued case becomes completely analogous to the scalar situation treated in Section \ref{s3}:

\begin{theorem}\lb{t5.2}
Let $\ell, m , \kk \in \bbN, \alpha \in \bbR,$ and $\rho, \gamma, \tau \in (0,\infty)$. The following hold:\\[2mm]
$(i)$ If $\rho \geq e_{\kk} \gamma$ and $1 \leq \ell \leq m $, then for all $f \in C_{0}^{\infty}((\rho, \infty); \cH)$,
\begin{align}\lb{5.6}
&\int_{\rho}^{\infty} dx \, x^{\alpha} \big\| f^{(m )}(x) \big\|_{\cH}^{2}
\geq A(\ell, \alpha) \int_{\rho}^{\infty} dx \,  x^{\alpha - 2\ell}   \big\|f^{(m  - \ell)}(x) \big\|_{\cH}^{2}    \no \\
&\quad+ B(\ell,\alpha) \sum_{k=1}^{\kk} \int_{\rho}^{\infty}  dx \, x^{\alpha - 2\ell} \prod_{p=1}^{k} [\ln_{p}(x/\gamma)]^{-2}  \big\|f^{(m  - \ell)}(x) \big\|_{\cH}^{2}    \\
&\quad+ \sum_{j=2}^{\ell} |c_{2j}(\ell,\alpha)| A(j,0) \int_{\rho}^{\infty}  dx \, x^{\alpha - 2\ell} [\ln(x/\gamma)]^{-2j}   \big\|f^{(m  - \ell)}(x) \big\|_{\cH}^{2}    \no  \\
&\quad+ \sum_{j=2}^{\ell} |c_{2j}(\ell,\alpha)|  B(j,0) \sum_{k=1}^{\kk - 1} \int_{\rho}^{\infty} dx \, 
x^{\alpha - 2\ell} [\ln(x/\gamma)]^{-2j} \no \\
&\hspace*{5.6cm} \times \prod_{p=1}^{k} [\ln_{p+1}(x/\gamma)]^{-2}   \big\|f^{(m  - \ell)}(x) \big\|_{\cH}^{2}. \no 
\end{align}
$(ii)$ If $\rho \geq \tau$ and $1 \leq \ell \leq m $, then for all $f \in C_{0}^{\infty}((\rho, \infty); \cH)$,
\begin{align}\lb{5.7}
&\int_{\rho}^{\infty} dx \, x^{\alpha} \big\| f^{(m )}(x) \big\|_{\cH}^{2}
\geq A(\ell, \alpha) \int_{\rho}^{\infty} dx \,  x^{\alpha - 2\ell}   \big\|f^{(m  - \ell)}(x) \big\|_{\cH}^{2}  \no \\
&\quad+ B(\ell,\alpha) \sum_{k=1}^{\kk} \int_{\rho}^{\infty} dx \, x^{\alpha - 2\ell} \prod_{p=1}^{k} L_{p}^{2}(\tau/x)   \big\|f^{(m  - \ell)}(x) \big\|_{\cH}^{2}   \\
&\quad+ \sum_{j=2}^{\ell} |c_{2j}(\ell,\alpha)| A(j,0) \int_{\rho}^{\infty}  dx \, x^{\alpha - 2\ell} L_{1}^{2j}(\tau/x)    \big\|f^{(m  - \ell)}(x) \big\|_{\cH}^{2}  \no  \\
&\quad+ \sum_{j=2}^{\ell} |c_{2j}(\ell,\alpha)|  B(j,0) \sum_{k=1}^{\kk -1} \int_{\rho}^{\infty}  dx \, x^{\alpha - 2\ell} L_{1}^{2j}(\tau/x)  \prod_{p=1}^{k}L_{p+1}^{2}(\tau/x)  \big\|f^{(m  - \ell)}(x) \big\|_{\cH}^{2}.  \no 
\end{align}
$(iii)$ If $\gamma \geq e_{\kk} \rho$ and $1 \leq \ell \leq m $, then for all $f \in C_{0}^{\infty}((0, \rho); \cH)$,
\begin{align}\lb{5.8}
&\int_{0}^{\rho} dx \, x^{\alpha} \big\| f^{(m )}(x) \big\|_{\cH}^{2}
\geq A(\ell, \alpha) \int_{0}^{\rho} dx \,  x^{\alpha - 2\ell}    \big\|f^{(m  - \ell)}(x) \big\|_{\cH}^{2}  \no \\
&\quad+ B(\ell,\alpha) \sum_{k=1}^{\kk} \int_{0}^{\rho}  dx \, x^{\alpha - 2\ell}  \prod_{p=1}^{k} [\ln_{p}(\gamma/x)]^{-2}    \big\|f^{(m  - \ell)}(x) \big\|_{\cH}^{2}  \\
&\quad+ \sum_{j=2}^{\ell} |c_{2j}(\ell,\alpha)| A(j,0) \int_{0}^{\rho}  dx \, x^{\alpha - 2\ell}  [\ln(\gamma/x)]^{-2j}   \big\|f^{(m  - \ell)}(x) \big\|_{\cH}^{2}  \no  \\
&\quad+ \sum_{j=2}^{\ell} |c_{2j}(\ell,\alpha)|  B(j,0) \sum_{k=1}^{\kk - 1}\int_{0}^{\rho} dx \, 
x^{\alpha - 2\ell} [\ln(\gamma/x)]^{-2j}     \no \\
&\hspace*{5.5cm} \times \prod_{p=1}^{k} [\ln_{p+1}(\gamma/x)]^{-2}    \big\|f^{(m  - \ell)}(x) \big\|_{\cH}^{2}. \no 
\end{align}
$(iv)$ If $\tau \geq \rho$ and $1 \leq \ell \leq m $, then for all $f \in  C_{0}^{\infty}((0, \rho); \cH)$,
\begin{align}\lb{5.9}
&\int_{0}^{\rho} dx \, x^{\alpha} \big\| f^{(m )}(x) \big\|_{\cH}^{2}
\geq A(\ell, \alpha)\int_{0}^{\rho}  dx \,  x^{\alpha - 2\ell}  \big\|f^{(m  - \ell)}(x) \big\|_{\cH}^{2}  \no \\
&\quad+ B(\ell,\alpha) \sum_{k=1}^{\kk} \int_{0}^{\rho} dx \, x^{\alpha - 2\ell} \prod_{p=1}^{k} L_{p}^{2}(x/\tau)  \big\|f^{(m  - \ell)}(x) \big\|_{\cH}^{2}  \\
&\quad+ \sum_{j=2}^{\ell} |c_{2j}(\ell,\alpha)| A(j,0) \int_{0}^{\rho} dx \, x^{\alpha - 2\ell} L_{1}^{2j}(x/\tau)   \big\|f^{(m  - \ell)}(x) \big\|_{\cH}^{2}  \no  \\
&\quad+ \sum_{j=2}^{\ell} |c_{2j}(\ell,\alpha)|  B(j,0) \sum_{k=1}^{\kk - 1}\int_{0}^{\rho} dx \, x^{\alpha - 2\ell} L_{1}^{2j}(x/\tau)  \prod_{p=1}^{k}L_{p+1}^{2}(x/\tau)   \big\|f^{(m  - \ell)}(x) \big\|_{\cH}^{2}.  \no 
\end{align}
$(v)$ Inequalities \eqref{5.6}--\eqref{5.9} are strict for $f \not \equiv 0$ on $(\rho,\infty)$, respectively, 
$(0,\rho)$. \\[1mm]
$(vi)$ In the exceptional cases $\alpha \in \{2\ell-1\}_{1 \leq \ell \leq m}$ $($i.e., if and only if 
$A(\ell,\alpha) = 0$$)$, the first terms containing $A(\ell,\alpha)$ on the right-hand sides of 
\eqref{5.6}--\eqref{5.9} are to be deleted.
\end{theorem}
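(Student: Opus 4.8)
The plan is to replay the four scalar arguments from the proof of Theorem \ref{t3.1}, substituting $\|\,\cdot\,\|_{\cH}^2$ for $|\,\cdot\,|^2$ and the $\cH$-inner product $(\,\cdot\,,\,\cdot\,)_{\cH}$ for the pointwise conjugate product $\ol{f(x)}\,g(x)$ throughout. The crucial point is that every analytic ingredient used in Section \ref{s3} has already been lifted to the vector-valued setting: the transformation formula \eqref{5.3} and its bilinear consequence \eqref{5.4b} replace \eqref{2.3} and \eqref{3.4}; the weight identities \eqref{5.4c}, \eqref{5.4d} replace \eqref{3.5}, \eqref{3.5b}; the integration-by-parts identity \eqref{5.5} replaces \eqref{2.14}; and Lemma \ref{l5.1} replaces the unweighted Birman--Hardy--Rellich inequality \eqref{1.1} in the base step of the induction. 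Since properties $(i)$--$(v)$ of the constants $c_{\ell}(m,\alpha)$ in \eqref{2.4} are purely algebraic statements about the characteristic polynomial $P_{m,\alpha}$, they are unaffected by the passage from scalars to $\cH$, and the telescoping identity \eqref{2.13} is likewise a scalar fact already in hand.

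First I would treat part $(i)$. After the scaling \eqref{3.18b} (now applied to an $\cH$-valued $f$) and the transformation \eqref{5.2}, I combine \eqref{5.4b} with \eqref{5.5} and the weight identities \eqref{5.4c}, \eqref{5.4d} to reproduce, term by term, the long chain of equalities culminating in \eqref{3.6}, with $\big|w^{(j)}(t)\big|^2$ and $|w(t)|^2$ everywhere replaced by $\big\|w^{(j)}(t)\big\|_{\cH}^2$ and $\|w(t)\|_{\cH}^2$. The decisive step is the $j$-fold integration by parts
\[
\int dt \, \big(w^{(2j)}(t), w(t)\big)_{\cH} = (-1)^{j} \int dt \, \big\|w^{(j)}(t)\big\|_{\cH}^{2},
\]
the vector-valued analog of $\int w^{(2j)}\ol{w} = (-1)^{j}\int\big|w^{(j)}\big|^{2}$, which converts the signed sum in \eqref{5.4b} into the nonnegative quantity
\[
\gamma^{\alpha - 2m + 1} \sum_{j=1}^{m} |c_{2j}(m,\alpha)| \Big\{ \int dt \, \big\|w^{(j)}(t)\big\|_{\cH}^{2} - A(j,0) \int dt \, t^{-2j} \|w(t)\|_{\cH}^{2} - \cdots \Big\}.
\]
Each brace is nonnegative by Lemma \ref{l5.1}, and an induction over $\kk \in \bbN$, with base case $\kk = 1$ supplied directly by Lemma \ref{l5.1}, then yields \eqref{5.6}.

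Parts $(ii)$--$(iv)$ follow the same template, invoking instead the modified transformations \eqref{3.7}, \eqref{3.21}, \eqref{3.28} (which, as noted just after \eqref{5.4d}, generalize line by line to the $\cH$-valued case), together with the $\cH$-versions of the logarithmic identities \eqref{3.10}--\eqref{3.11b}, \eqref{3.24}--\eqref{3.25}, and \eqref{3.30}--\eqref{3.31}. For the strictness assertion $(v)$, I would observe that $f \not\equiv 0$ forces $w \not\equiv 0$ (respectively $v \not\equiv 0$, $u \not\equiv 0$) through the pointwise relation \eqref{5.2}, whereupon the sharp equality clause in Lemma \ref{l5.1} renders the base-step inequality strict and the induction propagates strictness. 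The exceptional case $(vi)$, where $\alpha \in \{2j-1\}_{1 \leq j \leq \ell}$ so that $A(\ell,\alpha) = 0$, is handled exactly as in the scalar proof by passing to the limit $\alpha \to 2j-1$ and using the continuity of $A(m,\alpha)$, $B(m,\alpha)$, and $c_{2j}(m,\alpha)$ as polynomials in $\alpha \in \bbR$.

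I expect no genuine obstacle here: the entire difficulty of Theorem \ref{t3.1} resided in the algebraic bookkeeping of the constants and in the telescoping reduction, both of which are scalar facts that transfer unchanged. The single point demanding brief care is confirming that the $j$-fold integration by parts in the $\cH$-valued identity \eqref{5.4b} genuinely produces the real, nonnegative terms $\big\|w^{(j)}\big\|_{\cH}^{2}$ rather than a complex pairing; this is immediate from the formal self-adjointness of $d^{2j}/dt^{2j}$ under \eqref{5.5} together with the compact support of $w$ (respectively $v$, $u$), so that no boundary terms survive.
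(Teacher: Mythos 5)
Your proposal is correct and follows essentially the same route as the paper, which proves Theorem \ref{t5.2} precisely by substituting Lemma \ref{l5.1} and the vector-valued identities \eqref{5.2}--\eqref{5.5} into the scalar induction of Theorem \ref{t3.1} and following it line by line. The one point you single out for care --- that the $j$-fold integration by parts of $\big(w^{(2j)},w\big)_{\cH}$ yields the real, nonnegative quantities $\big\|w^{(j)}\big\|_{\cH}^2$ with no boundary terms --- is indeed the only place where the vector-valued structure enters, and your justification via \eqref{5.5} and compact support is the intended one.
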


\begin{corollary}\lb{c5.3}
Let $\ell, m  \in \bbN, \alpha \in \bbR,$ and $\rho, \tau \in (0,\infty)$. Then \eqref{5.7} and \eqref{5.9} 
extend to $N = \infty$. 
\end{corollary}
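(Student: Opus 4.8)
The plan is to transcribe the proof of Corollary~\ref{c3.2} into the vector-valued setting, invoking Theorem~\ref{t5.2}\,$(ii)$,$(iv)$ in place of Theorem~\ref{t3.1}\,$(ii)$,$(iv)$ and replacing every scalar modulus $|\,\cdot\,|^2$ by the Hilbert space norm $\|\,\cdot\,\|_{\cH}^2$. By the symmetry between the exterior and interior cases it suffices to establish the extension of \eqref{5.7}; the extension of \eqref{5.9} is entirely analogous, using the scaling and modified transformation underlying Theorem~\ref{t5.2}\,$(iv)$ in place of those for Theorem~\ref{t5.2}\,$(ii)$.

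First I would fix $f \in C_{0}^{\infty}((\rho,\infty);\cH)$ and apply Theorem~\ref{t5.2}\,$(ii)$, which already furnishes \eqref{5.7} for every finite $\kk \in \bbN$. The decisive structural feature is that the left-hand side of \eqref{5.7}, namely $\int_{\rho}^{\infty} dx \, x^{\alpha} \|f^{(m)}(x)\|_{\cH}^2$, is independent of $\kk$, whereas its right-hand side is non-decreasing in $\kk$. Indeed, the terms carrying $A(\ell,\alpha)$ and $\sum_{j=2}^{\ell} |c_{2j}(\ell,\alpha)| A(j,0)$ do not depend on $\kk$, while passing from $\kk$ to $\kk+1$ merely appends further integrals of pointwise non-negative integrands: the factors $L_p^2(\tau/x)$ are squares, and the coefficients $B(\ell,\alpha)$, $A(j,0)$, $B(j,0)$, $|c_{2j}(\ell,\alpha)|$ are all non-negative (the first being a sum of squares by \eqref{1.5b}). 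Hence the partial sums over $k$ increase monotonically with $\kk$.

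Next I would invoke the elementary fact that a non-decreasing sequence bounded above converges: since the right-hand side of \eqref{5.7} is bounded above, uniformly in $\kk$, by the finite left-hand side, the limit $\kk \uparrow \infty$ of the right-hand side exists and still does not exceed the left-hand side. The monotone convergence theorem then identifies this limit with the expression obtained upon replacing each partial sum $\sum_{k=1}^{\kk}$ (resp.\ $\sum_{k=1}^{\kk-1}$) by the full series $\sum_{k=1}^{\infty}$, which is precisely \eqref{5.7} with $N = \infty$.

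The only point meriting attention---and the reason the corollary is stated for the normalized iterated logarithms $L_j(\,\cdot\,)$ rather than the $\ln_j(\,\cdot\,)$ of Theorem~\ref{t5.2}\,$(i)$,$(iii)$---is the finiteness of the limiting right-hand side, that is, convergence of the series $\sum_{k=1}^{\infty} \prod_{p=1}^{k} L_p^2(\tau/x)$; see \cite{BFT03, GM13, TZ07}. In the present argument, however, this requires no separate verification, since the a priori upper bound supplied by the left-hand side forces convergence automatically. Consequently there is no genuine obstacle beyond this bookkeeping: the vector-valued case is, line by line, the scalar proof of Corollary~\ref{c3.2} with $|\,\cdot\,|^2$ replaced by $\|\,\cdot\,\|_{\cH}^2$.
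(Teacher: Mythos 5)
Your argument is correct and is essentially identical to the paper's: the paper proves Corollary \ref{c5.3} by invoking Theorem \ref{t5.2}\,$(ii)$,$(iv)$ for each finite $N$ and then following the scalar proof of Corollary \ref{c3.2} line by line, i.e., letting $N \uparrow \infty$ and using that the right-hand side is nondecreasing in $N$ while bounded above by the $N$-independent left-hand side. Your additional bookkeeping (nonnegativity of the coefficients and the monotone-convergence identification of the limit with the infinite series) only makes explicit what the paper leaves implicit.
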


Using Lemma \ref{l5.1} and identity \eqref{5.5} for the base step in the induction proof over $\kk \in \bbN$, one can follow the special scalar case treated in the proof of Theorem \ref{t3.1}, and Corollary \ref{c3.2} line by line.

As in the scalar case, the constants $A(m ,\alpha)$, $\alpha \in \bbR \backslash \{2j-1\}_{1 \leq j \leq m}$, are sharp and the inequalities extend to the associated weighted Sobolev spaces of $\cH$-valued functions; we omit the details.

We conclude with the observation that the vector-valued Hardy case (i.e., $m=1$) without logarithmic refinements (i.e., $N = 0$), played an important role in the spectral theory of $n$-dimensional Schr\"odinger operators ($n \in \bbN$, $n \geq 2$) as detailed, for instance in \cite[Chs.~IV, V]{Ku78}. In this context one employs polar coordinates and $\cH$ is then naturally identified with $L^2(S^{n-1}; d^{n-1}\omega)$. This aspect will also play a crucial role in the multi-dimensional generalizations of the results presented in this note, see \cite{GLMP20}. 

\appendix
\section{Optimality of $A(m,\alpha)$} \lb{sA}
\renewcommand{\theequation}{A.\arabic{equation}}
\renewcommand{\thetheorem}{A.\arabic{theorem}}
\setcounter{theorem}{0} \setcounter{equation}{0}

In this appendix we demonstrate sharpness of the constants $A(\ell,\alpha)$, $1 \leq \ell \leq m$. 

\begin{theorem} \lb{tA.1} 
The constants $A(\ell,\alpha)$, $1 \leq \ell \leq m$, $\alpha \in \bbR \backslash \{2j-1\}_{1 \leq j \leq \ell}$, in 
Theorems \ref{t3.1} and \ref{t3.5} are sharp. 
\end{theorem}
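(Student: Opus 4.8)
The plan is to prove optimality by producing, for each fixed $\ell$ with $1 \leq \ell \leq m$ and each $\alpha \in \bbR \backslash \{2j-1\}_{1 \leq j \leq \ell}$, a sequence of admissible test functions along which the quotient
\[
Q[f] = \frac{\int_{0}^{\rho} dx \, x^{\alpha} \big|f^{(m)}(x)\big|^{2}}{\int_{0}^{\rho} dx \, x^{\alpha - 2\ell}\big|f^{(m-\ell)}(x)\big|^{2}}
\]
converges to $A(\ell,\alpha)$. Since all refinement terms on the right-hand side of \eqref{3.35} carry nonnegative coefficients, any admissible enlargement of the leading constant $A(\ell,\alpha)$ would already force $Q[f] \geq A(\ell,\alpha) + \delta$ for all $f$ and some $\delta > 0$; thus it suffices to show $\inf_{f \neq 0} Q[f] = A(\ell,\alpha)$. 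The bound $Q[f] \geq A(\ell,\alpha)$ is exactly Theorem \ref{t3.1}\,$(iii)$ with the refinement terms discarded, so only the matching upper bound $\inf Q \leq A(\ell,\alpha)$ requires proof. I present the interior case $(0,\rho)$; the exterior interval $(\rho,\infty)$ and the two $L_{j}$-versions follow verbatim, since the scalings and (modified) Hartman--M\"uller-Pfeiffer transformations of Section \ref{s3} leave the quotient of the two leading integrals invariant.

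For the test functions I use regularized critical powers. Fixing cutoffs $\zeta \in C^{\infty}$ with $\zeta \equiv 1$ near $0$ and $\zeta \equiv 0$ near $\rho$, and $\theta \in C^{\infty}$ with $\theta \equiv 0$ near $0$ and $\theta \equiv 1$ on $[1,\infty)$, I set, for small $\epsilon > 0$,
\[
\beta = \beta(\epsilon) = \frac{2m - 1 - \alpha}{2} + \epsilon, \qquad f_{\epsilon,\eta}(x) = \theta(x/\eta)\,\zeta(x)\,x^{\beta} \in C_{0}^{\infty}((0,\rho)).
\]
The single computation driving the argument is the elementary identity $\big(x^{\beta}\big)^{(k)} = \big[\prod_{i=0}^{k-1}(\beta - i)\big] x^{\beta - k}$. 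On the region where both cutoffs equal $1$, the numerator and denominator integrands of $Q$ are therefore both constant multiples of $x^{-1 + 2\epsilon}$, with coefficients $\big[\prod_{i=0}^{m-1}(\beta - i)\big]^{2}$ and $\big[\prod_{i=0}^{m-\ell-1}(\beta - i)\big]^{2}$, respectively; the ratio of these coefficients equals $\big[\prod_{i=m-\ell}^{m-1}(\beta - i)\big]^{2}$. Relabeling $j = m - i$ and letting $\epsilon \downarrow 0$ gives $\prod_{i=m-\ell}^{m-1}(\beta - i) \to \prod_{j=1}^{\ell} \frac{2j - 1 - \alpha}{2}$, hence the coefficient ratio tends to $\prod_{j=1}^{\ell}\big(\frac{2j-1-\alpha}{2}\big)^{2} = A(\ell,\alpha)$.

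It remains to confirm that the cutoffs do not disturb this limit, via a double limit ($\eta \downarrow 0$, then $\epsilon \downarrow 0$). For fixed $\epsilon$, every contribution arising when a derivative falls on $\theta(x/\eta)$ is supported in $(0,\eta)$; substituting $x = \eta s$ and using $\alpha + 2\beta - 2m + 1 = 2\epsilon$ shows each such contribution is $O(\eta^{2\epsilon})$, while the portion of the bulk integral removed by the inner cutoff is $\int_{0}^{\eta} x^{-1+2\epsilon}\,dx = O(\eta^{2\epsilon})$; hence $Q[f_{\epsilon,\eta}] \to Q[\zeta x^{\beta}]$ as $\eta \downarrow 0$. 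For $\zeta x^{\beta}$, both leading integrals behave like $\mathrm{const}\cdot(2\epsilon)^{-1}$ because $\int_{0} x^{-1+2\epsilon}\,dx$ diverges as $\epsilon \downarrow 0$, whereas the contributions from the outer cutoff near $\rho$ stay $O(1)$; consequently $Q[\zeta x^{\beta}] \to A(\ell,\alpha)$. A diagonal choice $\eta = \eta(\epsilon) \downarrow 0$ then produces a sequence in $C_{0}^{\infty}((0,\rho))$ with $Q \to A(\ell,\alpha)$, so $A(\ell,\alpha)$ cannot be enlarged.

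The step I expect to be most delicate is the uniform bookkeeping in this double limit: one must verify that all the lower-order powers generated by the Leibniz rule when derivatives are distributed between $\theta$, $\zeta$, and $x^{\beta}$, together with the transition-region terms, remain negligible against the diverging bulk integral $\sim (2\epsilon)^{-1}$, uniformly as the two parameters are sent to their limits. Once $\inf Q = A(\ell,\alpha)$ is established for $C_{0}^{\infty}$, the extension to the homogeneous weighted Sobolev setting of Theorem \ref{t3.5} is immediate, since $C_{0}^{\infty}$ is dense there and the infimum of a quotient over a dense subset is unchanged.
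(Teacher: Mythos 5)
Your strategy is sound and belongs to the same family as the paper's own argument: both proofs exhibit near-extremal test functions obtained by cutting off a power $x^{\beta}$ whose exponent is dictated by the Euler equation $(-1)^{\ell}\big(x^{\alpha}y^{(\ell)}\big)^{(\ell)}=A(\ell,\alpha)\,x^{\alpha-2\ell}y$, and both reduce sharpness to the single computation that the ratio of the squared derivative coefficients of $x^{\beta}$ tends to $\prod_{j=1}^{\ell}\big(\tfrac{2j-1-\alpha}{2}\big)^{2}=A(\ell,\alpha)$. Where you genuinely differ is the regularization. The paper keeps the critical exponent $\beta=(2m-1-\alpha)/2$ exactly, so both integrands are constant multiples of $x^{-1}$ and diverge logarithmically; one multiplicative cutoff $\phi(x/\varepsilon)$ then makes both integrals equal to $A(\ell,\alpha)\ln(1/\varepsilon)+\Oh(1)$, the point being that every term in which a derivative hits $\phi(x/\varepsilon)$ is $\Oh(1)$ by scale invariance of $dx/x$ (after the substitution $\xi=x/\varepsilon$ the error integrals live on $[1,2]$ and are independent of $\varepsilon$). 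You instead shift the exponent by $\epsilon$, so that the integrals converge like $(2\epsilon)^{-1}$, at the cost of a second cutoff scale $\eta$ and a double limit. Both devices work for the quotient $Q$; the paper's single-parameter version keeps the bookkeeping you flag as "most delicate" shorter, while yours trades the $\ln(1/\varepsilon)$-versus-$\Oh(1)$ comparison for a cleaner $\epsilon^{-1}$-versus-$\Oh(1)$ one. (Minor point: on the exterior interval you need $\beta=(2m-1-\alpha)/2-\epsilon$ so that $x^{-1-2\epsilon}$ is integrable at infinity; the rest adapts as you say.)

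One caveat, which your argument shares with the printed proof: for $\ell<m$ the hypothesis $\alpha\in\bbR\setminus\{2j-1\}_{1\leq j\leq\ell}$ still permits $\alpha=2j_{0}-1$ for some $j_{0}\in\{\ell+1,\dots,m\}$, in which case $A(m,\alpha)=0$. Then both of your bulk coefficients $\big[\prod_{i=0}^{m-1}(\beta-i)\big]^{2}$ and $\big[\prod_{i=0}^{m-\ell-1}(\beta-i)\big]^{2}$ contain the factor $\big(\beta-(m-j_{0})\big)^{2}=\epsilon^{2}$, so the two "divergent" bulk integrals are in fact $\Oh(\epsilon)$ and are swamped by the $\Oh(1)$ contributions of the outer cutoff $\zeta$ near $x=\rho$ (for instance the Leibniz term $\zeta^{(m)}x^{\beta}$ carries no small factor). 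In that regime $Q[f_{\epsilon,\eta}]$ no longer tends to $A(\ell,\alpha)$. The paper's construction degenerates at exactly the same values of $\alpha$ (its normalization $\wti A(\ell,\alpha)/\wti A(m,\alpha)$ divides by zero and its $f_{0}$ reduces to a polynomial of degree less than $m-\ell$), so this is not a defect relative to the model proof; but a complete treatment of the stated range of $\alpha$ would require an additional device for these finitely many exceptional values, e.g.\ prescribing the regularized critical power at the level of $f^{(m-\ell)}$ and integrating up subject to the appropriate moment conditions.
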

\begin{proof}
For simplicity, we consider the interval $(0,\rho)$ (the case $(\rho,\infty)$ being completely analogous). 

To simplify notation we assume, without loss of generality, that $\rho > 2$ for the remainder of this proof. 

We first present the proof for the case $\ell = m$ and near the end indicate the necessary changes to treat the analogous cases $1 \leq \ell \leq m-1$, $m \geq 2$. Introducing  
\begin{equation}
y_0(x) = x^{(2\ell - 1 - \alpha)/2}, \quad x > 0, \; \ell \in \bbN, \; \alpha \in \bbR,    \lb{A.1}
\end{equation}
one notes the facts
\begin{align}
&y_0^{(\ell)}(x) = 2^{- \ell} (2\ell-1-\alpha)(2\ell-3-\alpha) \cdots (3-\alpha)(1-\alpha) x^{-(1+\alpha)/2},   \\
&x^{\alpha} \big[y_0^{(\ell)}\big]^2 = A(\ell,\alpha) x^{\alpha-2\ell} [y_0(x)]^2 = A(\ell,\alpha) x^{-1},   \\
& (- 1)^{\ell} \big(x^{\alpha} y_0^{(\ell)}(x)\big)^{(\ell)} - A(\ell,\alpha) x^{\alpha - 2 \ell} y_0(x) = 0.    \lb{A.4} 
\end{align}
Next, we also introduce the cutoff functions 
\begin{align}
& \phi \in C^{\infty}(\bbR), \quad 0 \leq \phi (x) \leq 1, \, x \in \bbR, \quad \phi(x) = \begin{cases} 0, & x \leq 1, \\
1, & x \geq 2, \end{cases}    \\[1mm] 
& \phi_{\varepsilon}(x) = \phi(x/\varepsilon), \, x \in \bbR,  \, \text{ $0 < \varepsilon$ sufficiently small},   \\
&  \psi \in C^{\infty}(\bbR), \quad 0 \leq \psi (x) \leq 1, \, x \in \bbR, \quad \psi(x) = \begin{cases} 1, & x \leq \rho-2, \\
0, & x \geq \rho-1, \end{cases} 
\end{align}
and mollify $y_0$ as follows,
\begin{equation}
y_{0,\varepsilon}(x) = y_0(x) \phi_{\varepsilon}(x) \psi(x), \; 0 \leq x \leq \rho, \quad 
y_{0,\varepsilon} \in C_0^{\infty}((0,\rho)).    \lb{A.8} 
\end{equation}
Then one verifies
\begin{align}
& A(\ell,\alpha) \int_0^{\rho} dx \, x^{\alpha - 2 \ell} [y_{0,\varepsilon}(x)]^2 
= A(\ell,\alpha) \int_0^{\rho} dx \, x^{-1} \phi(x/\varepsilon)^2 \psi(x)^2    \no \\
& \quad =  A(\ell,\alpha) \int_{\varepsilon}^{\rho-2} dx \, x^{-1} \phi(x/\varepsilon)^2 
+  A(\ell,\alpha) \int_{\rho-2}^{\rho-1} dx \, x^{-1} \psi(x)^2   \no \\
& \quad =  A(\ell,\alpha) \int_{2}^{(\rho- 2)/\varepsilon} d\xi \, \xi^{-1} \phi(\xi)^2 
+ A(\ell,\alpha) \int_1^2 d\xi \, \xi^{-1} \phi(\xi)^2     \no \\
& \qquad +  A(\ell,\alpha) \int_{\rho-2}^{\rho-1} dx \, x^{-1} \psi(x)^2   \no \\
& \quad \underset{\varepsilon \downarrow 0}{=} A(\ell,\alpha) \ln(1/\varepsilon) + \Oh(1),    \lb{A.9} 
\end{align}
and 
\begin{align}
& \int_0^{\rho} dx \, x^{\alpha} \big[y_{0,\varepsilon}^{(\ell)}(x)\big]^2 
= \int_{\varepsilon}^{\rho-2} dx \, x^{\alpha} \big[y_{0,\varepsilon}^{(\ell)}(x)\big]^2  
+ \int_{\rho-2}^{\rho-1} dx \, x^{\alpha} \big[y_{0,\varepsilon}^{(\ell)}(x)\big]^2    \no \\
& \quad = \int_{\varepsilon}^{\rho-2} dx \, x^{\alpha} \big\{[(y_0(x) \phi(x/\varepsilon)]^{(\ell)}\big\}^2 
+ \int_{\rho-2}^{\rho-1} dx \, x^{\alpha} \big\{[y_0(x) \psi(x)]^{(\ell)}\big\}^2.     \lb{A.10}
\end{align}
Next, one employs
\begin{align}
& [y_0(x) \phi(x/\varepsilon)]^{(\ell)} 
= \sum_{k=0}^{\ell} \begin{pmatrix} \ell \\ k \end{pmatrix} y_0^{(\ell-k)}(x) \f{d^k}{dx^k} \phi(x/\varepsilon)   \no \\
& \quad = x^{-(1 + \alpha)/2} \sum_{k=0}^{\ell} c_{\ell,k,\alpha} (x/\varepsilon)^k \phi^{(k)}(x/\varepsilon), 
\lb{A.11} 
\end{align}
where 
\begin{align}
\begin{split}
c_{\ell,0,\alpha} &= 2^{- \ell} (2\ell -1-\alpha)(2\ell-3-\alpha) \cdots (3-\alpha)(1-\alpha), \\
c_{\ell,0,\alpha}^2 &= A(\ell,\alpha).
\end{split}
\end{align}
Thus, one can continue \eqref{A.10} as follows:
\begin{align}
\eqref{A.10} &= \int_{\varepsilon}^{\rho-2} dx \, x^{-1} \Bigg[\sum_{k=0}^{\ell} c_{\ell,k,\alpha} (x/\varepsilon)^k 
\phi^{(k)}(x/\varepsilon)\Bigg]^2     \no \\
& \quad + \int_{\rho-2}^{\rho-1} dx \, x^{\alpha} \big\{[y_0(x) \psi(x)]^{(\ell)}\big\}^2  \no \\
&=  \int_{1}^{(\rho-2)/\varepsilon} d\xi \, \xi^{-1} \Bigg[\sum_{k=0}^{\ell} c_{\ell,k,\alpha} \xi^k 
\phi^{(k)}(\xi)\Bigg]^2 + \int_{\rho-2}^{\rho-1} dx \, x^{\alpha} \big\{[y_0(x) \psi(x)]^{(\ell)}\big\}^2  \no \\
&=  \int_{1}^{(\rho-2)/\varepsilon} d\xi \, \xi^{-1} 
\Bigg[c_{\ell,0,\alpha} \phi(\xi) + \sum_{k=1}^{\ell} c_{\ell,k,\alpha} \xi^k 
\phi^{(k)}(\xi)\Bigg]^2      \no \\
& \quad + \int_{\rho-2}^{\rho-1} dx \, x^{\alpha} \big\{[y_0(x) \psi(x)]^{(\ell)}\big\}^2  \no \\
&= A(\ell,\alpha)  \int_{1}^{(\rho-2)/\varepsilon} d\xi \, \xi^{-1} \phi(\xi)^2    \no \\
& \quad + \int_{1}^{2} d\xi \, \xi^{-1} 
\Bigg\{2 c_{\ell,0,\alpha} \phi(\xi) \sum_{k=1}^{\ell} c_{\ell,k,\alpha} \xi^k 
\phi^{(k)}(\xi)     \no \\ 
& \hspace*{2.5cm} + \Bigg[\sum_{k=1}^{\ell} c_{\ell,k,\alpha} \xi^k 
\phi^{(k)}(\xi)\Bigg]^2  \Bigg\} + \int_{\rho-2}^{\rho-1} dx \, x^{\alpha} \big\{[y_0(x) \psi(x)]^{(\ell)}\big\}^2  \no \\
& \underset{\varepsilon \downarrow 0}{=} A(\ell,\alpha)  \int_{1}^{(\rho-2)/\varepsilon} d\xi \, \xi^{-1} \phi(\xi)^2 
+ \Oh(1)     \no \\
& \underset{\varepsilon \downarrow 0}{=} A(\ell,\alpha)  \int_{2}^{(\rho-2)/\varepsilon} d\xi \, \xi^{-1}  
+ A(\ell,\alpha)  \int_{1}^{2} d\xi \, \xi^{-1} \phi(\xi)^2 + \Oh(1)      \no \\
& \underset{\varepsilon \downarrow 0}{=} A(\ell,\alpha) \ln(1/\varepsilon) + \Oh(1),   \lb{A.13}
\end{align}
employing the fact that $\supp \big(\phi^{(k)}\big) \subseteq [1,2]$, $k \geq 1$. 
Thus,\eqref{A.9} and \eqref{A.13} yield
\begin{equation}
\frac{ \int_0^{\rho} dx \, x^{\alpha} \big[y_{0,\varepsilon}^{(\ell)}(x)\big]^2}{A(\ell,\alpha) \int_0^{\rho} 
dx \, x^{\alpha - 2 \ell} [y_{0,\varepsilon}(x)]^2 } \underset{\varepsilon \downarrow 0}{=} 
1 + \Oh(1/\ln(1/\varepsilon)),     \lb{A.14} 
\end{equation}
proving sharpness of $A(\ell,\alpha)$ for $\ell \in \bbN$ and 
$\alpha \in \bbR \backslash \{2j-1\}_{1 \leq j \leq \ell}$ on the function space $C_0^{\infty}((0,\rho))$. 

For $1 \leq \ell \leq m-1$, $m \geq 2$, one replaces $y_0$ by
\begin{align}
\begin{split} 
& f_0(x) = [\wti A(\ell,\alpha)/ \wti A(m,\alpha)] x^{(2m - \alpha - 1)/2}, \quad x >0, \; \alpha \in \bbR,  \\
& \wti A(\ell,\alpha) = 2^{-\ell} (2 \ell - 1 - \alpha)(2 \ell - 3 - \alpha) \cdots (3-\alpha) (1 - \alpha), \quad \alpha \in \bbR,
\end{split}
\end{align}
and observes the facts,
\begin{align}
& f_0^{m-\ell}(x) = x^{(2\ell - 1 - \alpha)/2},    \\
& f_0^{(m)}(x) = \wti A(\ell,\alpha) x^{- (\alpha + 1)/2},    \\
& x^{\alpha} \big[f_0^{(m)}(x)\big]^2 = A(\ell,\alpha) x^{\alpha - 2 \ell} \big[f_0^{(m-\ell)}(x)\big]^2 = A(\ell,\alpha) x^{-1},    
\end{align}
and then mollifies $f_0$ as before via
\begin{equation}
f_{0,\varepsilon}(x) = f_0(x) \phi_{\varepsilon}(x) \psi(x), \; 0 \leq x \leq \rho, \quad  
f_{0,\varepsilon} \in C_0((0,\infty)). 
\end{equation}
At this point one can follow the above proof step by step arriving at 
\begin{equation}
\frac{ \int_0^{\rho} dx \, x^{\alpha} \big[f_{0,\varepsilon}^{(m)}(x)\big]^2}{A(\ell,\alpha) \int_0^{\rho} 
dx \, x^{\alpha - 2 \ell} \big[f_{0,\varepsilon}^{(m-\ell)}(x)\big]^2 } \underset{\varepsilon \downarrow 0}{=} 
1 + \Oh(1/\ln(1/\varepsilon)),
\end{equation}
once more proving sharpness of $A(\ell,\alpha)$ for $\ell \in \bbN$ and 
$\alpha \in \bbR \backslash \{2j-1\}_{1 \leq j \leq \ell}$. 

Since Theorem \ref{t3.5} exhibits the same constant 
$A(\ell,\alpha)$, the latter is sharp also for the larger function space $H_0^m((0,\rho); x^{\alpha}dx)$. 
\end{proof}

\begin{remark} \lb{rA.2} 
$(i)$ Once more we recall that $A(\ell,\alpha) = 0$ if and only if $\alpha \in \{2j-1\}_{1 \leq j \leq \ell}$. Thus, the  inequality 
\begin{equation}
\int_0^{\rho} dx \, x^{\alpha} \big| f^{(m )}(x) \big|^{2}
\geq A(\ell, \alpha) \int_0^{\rho} dx \,  x^{\alpha - 2\ell} \big|f^{(m - \ell)}(x)\big|^{2}, 
\quad f \in C_0^{\infty}((0,\rho)),   \lb{A.21}
\end{equation}
is rendered trivial if $\alpha \in \{2j-1\}_{1 \leq j \leq \ell}$, with the right-hand side of \eqref{A.21} being zero. The same observation applies of course to the remaining three cases $(i)$, $(ii)$, and $(iv)$ in Theorem \ref{t3.1}. However, we emphasize that inequalities \eqref{3.33}--\eqref{3.36} remain valid and nontrivial with just the first terms on their right-hand sides removed. \\[1mm] 
$(ii)$ For $\alpha \in \bbR \backslash \{2j-1\}_{1 \leq j \leq \ell}$, inequality \eqref{A.21} extends to $\rho = \infty$, again with $A(\ell, \alpha)$ being the sharp constant for $f \in C_0^{\infty}((0,\infty))$. In particular, the  proof of Theorem \ref{tA.1}, suitably adapted, extends to the case $\rho = \infty$. (This observation applies of course to cases $(i)$, $(ii)$ (if $\rho = 0$), and $(iii)$, $(iv)$ (if $\rho = \infty$) in Theorem \ref{t3.1}). This is of course in accordance with the fact that $C_0^{\infty}((0,\rho))$-functions extended by zero beyond $\rho$, $\rho \in (0,\infty)$, can be viewed as a subset of $C_0^{\infty}((0,\infty))$. \hfill $\diamond$
\end{remark}

\begin{remark} \lb{rA.3} 
Regarding sharpness (optimality) of constants, we first note that the smaller the underlying function space, the larger the efforts needed to prove optimality. In particular, in connection with the proof presented in 
Theorem \ref{tA.1}, assuming $f \in C_0^{\infty}((0,\rho))$ requires mollification of $y_0$ in \eqref{A.1} near $x=0$ and $x=\rho$ and of course analogously in the case $f \in C_0^{\infty}((\rho,\infty))$. Many of the results cited in the remainder of this remark, under particular restrictions on the weight parameter $\alpha$, establish sharpness for larger classes of functions $f$ which do not automatically continue to hold in the $C_0^{\infty}((0,\rho))$-context. It is this simple observation that adds considerable complexity to sharpness proofs for the space $C_0^{\infty}((0,\rho))$. (By the same token, optimality proofs obtained for $C_0^{\infty}$ function spaces automatically hold for larger function spaces as long as the inequalities have already been established for the larger function spaces with the same constants $A(m,\alpha), B(m,\alpha)$.) This comment applies, in particular, to many papers that prove sharpness results in multi-dimensional situations 
for larger function spaces such as $C_0^{\infty}(B(0;\rho))$ or (homogeneous, weighted) Sobolev spaces rather than $C_0^{\infty}(B(0;\rho) \backslash \{0\})$, $B(0;\rho) \subseteq \bbR^n$ the open ball in $\bbR^n$, $n \geq 2$, with center at the origin $x=0$ and radius $\rho > 0$. Unless 
$C_0^{\infty}(B(0;\rho) \backslash \{0\})$ is dense in the appropriate norm (cf.\ the discussion preceding Theorem \ref{t3.5} in the one-dimensional context), one cannot {\it a priori} assume that the optimal constants $A(m, \wti \alpha)$ and $B(m, \wti \alpha)$ (with $\wti \alpha$ appropriately depending on $n$, e.g., 
$\wti \alpha = \alpha + n - 1$) remain the same for $C_0^{\infty}(B(0;\rho))$ and 
$C_0^{\infty}(B(0;\rho) \backslash \{0\})$, say. At least in principle, they could actually increase for the space $C_0^{\infty}(B(0;\rho) \backslash \{0\})$. In this context we emphasize that the multi-dimensional results then naturally lead to one-dimensional results for $C_0^{\infty}((0,\rho))$ upon specializing to radially symmetric functions in $C_0^{\infty} (B(0;\rho) \backslash \{0\})$. 

Sharpness of the constant $A(m ,0)$, $m  \in \bbN$ (i.e., in the unweighted case, $\alpha = 0$), in connection with the space $C_0^{\infty}((0,\infty))$ has been shown by Yafaev \cite{Ya99}. In fact, he also established this result for fractional $m$ (in this context we also refer to appropriate norm bounds in $L^p(\bbR^n; d^nx)$ of operators of the form $|x|^{-\beta} |-i \nabla|^{-\beta}$, $1 < p < n/\beta$, see \cite[Sect.~1.7]{BE11}, \cite[Sects.~1.7, 4.2]{BEL15, He77, KSWW75, KPS81, OTY11, Sc72, Si83}). 
Sharpness of $A(2,0)$ (i.e., in the unweighted Rellich case) was shown by Rellich \cite[p.~91--101]{Re69} in connection with the space $C_0^{\infty}((0,\infty))$; his multi-dimensional results also yield sharpness of $A(2,n-1)$ for $n \in \bbN$, $n \geq 3$, again for $C_0^{\infty}((0,\infty))$. In this context see also \cite[Corollary~6.3.5]{BEL15}. An exhaustive study of optimality of $A(2, \wti \alpha)$ (i.e., Rellich inequalities with power weights) for the space $C_0^{\infty}(\Omega \backslash \{0\})$ for cones 
$\Omega \subseteq \bbR^n$, $n \geq 2$, appeared in Caldiroli and Musina \cite{CM12}. The authors, in particular, describe situations where $A(2, \wti \alpha)$ has to be replaced by other constants and also treat the special case of radially symmetric functions in detail. Additional results for power weighted Rellich inequalities appeared in \cite{Mu14, Mu14a}; further extensions of power weighted Rellich inequalities with sharp constants on $C_0^{\infty}(\bbR^n \backslash \{0\})$ were obtained in \cite{MSS15}; for optimal power weighted Hardy, Rellich, and higher-order inequalities on homogeneous groups, see \cite{RS17, RY20}. Many of these references also discuss sharp (power weighted) Hardy inequalities, implying optimality for $A(1, \wti \alpha)$. Moreover, replacing $f(x)$ by $F(x) = \int_0^x dt \, f(t)$ \big(or $F(x) = \int_x^{\infty} dt \, f(t)$\big), optimality of the Hardy constant $A(1,0)$ for larger, $L^p$-based function spaces, can already be found in 
\cite[Sect.~9.8]{HLP88} (see also \cite[Theorem~1.2.1]{BEL15}, \cite[Ch.~3]{KMP07}, \cite[p.~5--11]{KPS17}, \cite{La26, Mu72, PS15}, in connection with $A(1, \alpha)$). 

Sharpness results for $A(m, \alpha)$ and $B(m, \alpha)$ together are much less frequently discussed in the literature, even under suitable restrictions on $m$ and $\alpha$. The results we found in the literature 
primarily follow upon specializing multi-dimensional results for function spaces such as 
$C_0^{\infty}(\Omega\backslash \{0\})$, or $C_0^{\infty}(\Omega)$, $\Omega \subseteq \bbR^n$ open, and appropriate restrictions on $m$, $\alpha$, and $n\geq 2$, for radially symmetric functions to the one-dimensional case at hand (cf.\ the previous paragraph). In this context we mention that the Hardy case $m=1$, without a weight function, is studied in \cite{ACR01, AE05, AS02, AVV10, BM97, Co10, DHA04, FT02, Gr03, II15, MOW13, ST16, Ta15} (all for $N=1$), and in \cite{AH12, DHA05, GM08} (all for $N \in \bbN$); the case with power weight functions is discussed in \cite{BFT18}, \cite{GM11}, \cite[Ch.~6]{GM13} 
(for $N \in \bbN$); see also \cite{MOW15}. The Rellich case $m=2$ with a general power weight on $C_0^{\infty}(\Omega\backslash \{0\})$ is discussed in \cite{CM12} (for $N=1$); the Rellich case $m=2$, without weight function on $C_0^{\infty}(\Omega)$, is studied in \cite{DHA04, DHA04b, DHA12} (all for $N=1$), the case $N \in \bbN$ is studied in \cite{AGS06}; the case of additional power weights is treated in \cite{GM11}, \cite[Ch.~6]{GM13}, \cite{Mo12}.   
The general case $m \in \bbN$ is discussed in \cite{AS09} (for $N=1$) and in \cite{Ba07}, \cite{GM11}, 
\cite[Ch.~6]{GM13}, \cite{TZ07} (all for $N \in \bbN$ and including power weights). Employing oscillation theory, sharpness of the unweighted Hardy case $A(1,0) = B(1,0) = 1/4$, with 
$N \in \bbN$, was proved in \cite{GU98}. 

The special results available on sharpness of $B(m,\alpha)$ are all saddled with enormous complexity, especially, for larger values of $N \in \bbN$. In fact, a careful proof for general $N$ will rival the length of this paper and hence has not been attempted here. \hfill $\diamond$
\end{remark}

\begin{remark} \lb{rA.4} 
The proof of optimality of $A(\ell, \alpha)$ in Theorem \ref{tA.1} consists of two principal steps: \\[1mm] 
$(i)$ Identify a function $y_0$ (see \eqref{A.1}) which is not  in $C^{\infty}_0((0,\rho))$, but which satisfies (see \eqref{A.4}), 
\begin{equation}
\frac{\int_0^{\rho} dx \, x^{\alpha} \big|y_0^{(m)}(x)\big|^2}{\int_0^{\rho} dx \, x^{\alpha - 2 \ell}  
\big|y_0^{(m - \ell)}(x)\big|^2} = A(\ell, \alpha).      \lb{A.22}
\end{equation}
$(ii)$ Exhibit a family $\{y_{0, \varepsilon}\}_{\varepsilon > 0} \subset C^{\infty}_0((0,\rho))$ of multiplicative mollifications of $y_0$ (see \eqref{A.8})  that approaches $y_0$ as $\varepsilon \downarrow 0$ and for which (see \eqref{A.14})
\begin{equation} 
\lim _{\varepsilon \downarrow 0} \frac{\int_0^{\rho} dx \, x^{\alpha} 
\big|y_{0, \varepsilon}^{(m)}(x)\big|^2}{\int_0^{\rho} dx \, x^{\alpha - 2 \ell}  
\big|y_{0, \varepsilon}^{(m - \ell)}(x)\big|^2} = A(\ell, \alpha).      \lb{A.23} 
\end{equation} 
Unfortunately, due to the ensuing complexity when having to apply the product rule of differentiation again and again, this approach in connection with $A(\ell,\alpha)$ cannot naturally be adapted to a proof of  optimality of $B(\ell, \alpha)$. The proof of optimality of $B(\ell, \alpha)$ we are currently working out requires substantial modification to steps $(i)$ and $(ii)$ above. We sketch the new approach in the special case $\ell = m$ in inequality \eqref{3.35}. We abbreviate 
\begin{equation} 
W_{m, \alpha, N}(x) = A(m,\alpha) + B(m,\alpha) \sum_{k=1}^{N-1} \prod_{p=1}^{k} [\ln_{p}(\gamma/x)]^{-2}. 
\lb{A.24}
\end{equation} 
Instead of identifying one explicit functon $y_0$ which satisfies \eqref{A.22}, we use a modification of the proof of \cite[Theorem 2]{Ba07} to identify a family 
$\{f_{0, \varepsilon} \colon (0,\rho) \rightarrow \mathbb{C}\}_{\varepsilon > 0}$ of functions  which are not  in $C^{\infty}_0((0,\rho))$ but for which 
\begin{equation} 
\lim _{\varepsilon \downarrow 0}\frac{\int_0^{\rho} dx \, x^{\alpha} \big| f_{0, \varepsilon}^{(m )}(x) \big|^{2} - \int_0^{\rho} dx \, x^{\alpha -2m} W_{m, \alpha, N}(x) | f_{0, \varepsilon} (x)|^2}{\int_0^{\rho} dx \, x^{\alpha - 2m} \prod_{p=1}^{N} [\ln_{p}(\gamma/x)]^{-2} | f_{0, \varepsilon} (x)|^2}= B(m, \alpha).     
\lb{A.25}
\end{equation}
Instead of a family of multiplicative mollifications as in \eqref{A.8},  for each $\varepsilon >0$ we employ a family $\{f_{0, \varepsilon, \nu}\}_{\nu >0} \subset C^{\infty}_0((0,\rho))$ of  mollifications of $f_{0, \varepsilon}$  using convolution with an approximate identity which has the properties:  
\begin{equation} 
\lim_{\nu \downarrow 0} \int_0^{\rho} dx\, x^{\alpha} \big| f_{0, \varepsilon, \nu}^{(m)}(x)\big|^2 
= \int_0^{\rho} dx\, x^{\alpha} | f_{0, \varepsilon}(x)|^2,     \lb{A.26}
\end{equation}
and for $k = 0, 1, \cdot \cdot \cdot , N$, 
\begin{align}
\begin{split}  
& \lim _{\nu \downarrow 0} \int_0^{\rho} dx \, x^{\alpha -2m} 
\prod_{p=1}^{k} [\ln_{p}(\gamma/x)]^{-2} | f_{0, \varepsilon, \nu} (x)|^2    \\ 
& \quad = \int_0^{\rho} dx \, x^{\alpha -2m} 
\prod_{p=1}^{k} [\ln_{p}(\gamma/x)]^{-2} | f_{0, \varepsilon} (x)|^2.      \lb{A.27}
\end{split} 
\end{align}
Thus, roughly speaking, one gets
\begin{equation} 
\lim_{\varepsilon, \nu \downarrow 0} \frac{\int_0^{\rho} dx \, x^{\alpha} \big| f_{0, \varepsilon, 
\nu}^{(m )}(x) \big|^{2} - \int_0^{\rho} dx \, x^{\alpha - 2m} 
W_{m, \alpha, N}(x) | f_{0, \varepsilon} (x)|^2}{\int_0^{\rho} dx \, x^{\alpha - 2m} \prod_{p=1}^{N} 
[\ln_{p}(\gamma/x)]^{-2} | f_{0, \varepsilon, \nu} (x)|^2}= B(m, \alpha). 
\lb{A.28} 
\end{equation} 
This approach is that much longer than the proof of Theorem \ref{tA.1} that we felt we had no choice but to write a separate paper \cite{GLMP20a} for the proof of optimality of $B(\ell, \alpha)$. \hfill $\diamond$
\end{remark}

\medskip

\noindent 
{\bf Acknowledgments.} We gratefully acknowledge discussions with Marius Mitrea. 

\medskip


\end{document}